\newcommand\blfootnote[1]{%
  \begingroup
  \renewcommand\thefootnote{}\footnote{#1}%
  \addtocounter{footnote}{-1}%
  \endgroup
}
\newtheorem{theorem}{Theorem}%[section]
\newtheorem{lemma}[theorem]{Lemma}
\theoremstyle{definition}
\definecolor{sand}{HTML}{CCD3DB}
\definecolor{or}{HTML}{F58634}
\definecolor{sea}{HTML}{2E7FAB}
\definecolor{lag}{HTML}{F2DBDC}
\definecolor{lg}{gray}{0.85}
\definecolor{dg}{gray}{0.33}
\title{Utility Ghost:\ Gamified redistricting with partisan symmetry}
\author{Dustin~G.~Mixon\footnote{Department of Mathematics, The Ohio State University, Columbus, OH}\qquad Soledad Villar\footnote{Center for Data Science, New York University, New York, NY}}
\date{}
\begin{document}
\maketitle

\blfootnote{Send correspondence to \texttt{mixon.23@osu.edu}}

\begin{abstract}
Inspired by the word game Ghost, we propose a new protocol for bipartisan redistricting in which partisan players take turns assigning precincts to districts.
We prove that in an idealized setting, if both parties have the same number votes, then under optimal play in our protocol, both parties win the same number of seats.
We also evaluate our protocol in more realistic settings that show how our game nearly eliminates the first-player advantage exhibited in other redistricting protocols.
\end{abstract}

\section{Introduction}

Legislators across the United States are elected from voting districts, and these voting districts are modified every ten years to reflect changes in population distribution, as measured by the U.S.\ Census.
Redistricting is the responsibility of state governments, and if a given state government is led by either major political party, then the resulting redistricting may be drawn to benefit that party.
This effect has been called \textbf{partisan gerrymandering} ever since Governor Elbridge Gerry of Massachusetts approved a redrawing of voting districts in 1812---one district resembled the profile of a salamander, as famously lambasted by a political cartoon in the \textit{Boston Gazette}~\cite{Griffith:07}.
Over 200 years later, partisan gerrymandering continues to be a significant issue.
In 2017, the plaintiffs in U.S.\ Supreme Court case \textit{Gill v.\ Whitford} used the so-called efficiency gap statistic~\cite{StephanopoulosM:15} to argue that the voting districts for the Wisconsin State Assembly were drawn in a way that disproportionally wastes votes that were cast by Democrats.
In 2018, the Pennsylvania Supreme Court struck down the 2011 congressional map as an unconstitutional partisan gerrymander; here, the plantiffs leveraged Markov Chain--based techniques to sample the space of admissible maps and make informative comparisons with the map in question~\cite{ChikinaFP:17}. 
This trend of policing partisan gerrymandering in the courts has led to a flurry of relevant mathematical research, primarily in Markov Chain Monte Carlo sampling methods~\cite{FifieldHIT:15,BangiaEtal:17,HerschlangRM:17,HerchlangEtal:18,TamChoL:18} and in evaluating various fairness criteria~\cite{BernsteinD:17,Soberon:17,AlexeevM:17a,AlexeevM:17b,KuengMV:18}.
However, in light of recent changes to the U.S.\ Supreme Court bench, a new approach might be necessary to effectively combat partisan gerrymandering in the future.

Along these lines, one prominent approach is to \textit{prevent} partisan gerrymandering by changing the way maps are drawn in the first place.
For example, several states now delegate mapmaking powers to a non-partisan or bipartisan redistricting commission so as to minimize the effects of partisan politics.
In order for such a redistricting commission to succeed, one must either identify sufficiently independent agents, or design a protocol that ensures an independent outcome despite partisan inputs.
Pegden, Procaccia and Yu~\cite{PegdenPY:17} recently proposed one such protocol based on ``I-cut-you-choose'' solutions to classical cake-cutting problems.
Given the task of partitioning a state into $k$ districts, their \textit{I-cut-you-freeze} protocol is a game in which two partisan players make moves that iteratively determine a partition.
The first player draws an admissible map of $k$ proposed districts and passes the map to the second player.
For each subsequent turn, a player ``freezes'' one of the proposed districts in the map she's given, and then redraws the remainder of the map into proposed districts before passing the map back to the other player.
Each of these moves freezes an additional district, and so gameplay ends after $k$ such moves.
The main result in~\cite{PegdenPY:17} characterizes the number of districts won under optimal play in the case where districts have no geometric constraints (e.g., each district need not be contiguous).
Under optimal play, this protocol produces districts that exhibit partisan symmetry as $k\to\infty$, but for the smaller $k$ that we encounter in the real world, the I-cut-you-freeze protocol gives substantial advantage to the first player.
For example, there are currently five states with only $k=2$ U.S.\ congressional districts, in which case the first player serves as \textit{de facto} mapmaker, while the second player has no input in the process.

In this paper, we propose an alternative game that does not suffer from such first-player advantage.
Taking inspiration from the word game Ghost, we design a game in which partisan players take turns assigning precincts to districts.
In the non-geometric setting, we prove that both players win the same number of districts when they have the same number of votes.
We also test our game in the geometric setting, where we observe that our game produces districts that exhibit partisan symmetry.
When optimally playing our game to redistrict New Hampshire (based on election returns from the 2016 presidential election), we achieve proportional results---no matter which partisan player moves first.
In the following section, we introduce our game and discuss how it performs in the geometric setting.
Section~3 contains our theoretical result, and we discuss our results in Section~4.

\section{Utility Ghost and bipartisan redistricting}

Our protocol is motivated by the word game \textit{Ghost}:
Two players alternate presenting letters, and the first to either spell an English word or create a string that does not start a word loses.
We introduce a variant called \textbf{Utility Ghost}, which requires a language $L$ and utility functions $u_1,u_2\colon L\to\mathbb{R}$.
(We only consider finite languages.)
As in Ghost, players alternate presenting letters.
If either player creates a string that does not start a word in $L$, he receives $-\infty$ utility.
Otherwise, one of the players eventually spells a word $w\in L$, in which case player $1$ (i.e., the player who made the first move) receives utility $u_1(w)$ and player $2$ receives utility $u_2(w)$.
The goal of each player is to maximize utility.
As an example, letting $E$ and $S$ denote the sets of English and Spanish words consisting of at least three letters, one could play Utility Ghost with language $L=E\cup S$ and utility functions defined by
\[
u_1(w)
=-u_2(w)
=\left\{\begin{array}{rl}
1&\text{if }w\in E\setminus S\\
0&\text{if }w\in E\cap S\\
-1&\text{if }w\in S\setminus E.
\end{array}\right.
\]
In this instance of Utility Ghost,\footnote{Almost every first move from the English alphabet has a response in $\{\text{\'{e}},\text{\~{n}},\text{\'{o}}\}$, with the exception of q. Under optimal play, the first player wins this game since the word ``qubit'' can be forced.} the first player attempts to spell an English word, while the second player attempts to spell a Spanish word.
Overall, the rules of gameplay in Utility Ghost are effectively the same as in Ghost---all that has changed are the players' objectives.

We propose Utility Ghost as a gamification of bipartisan redistricting.
Suppose a bipartisan commission is tasked with partitioning $n$ labeled \textit{atoms} into $k$ admissible districts; here, atoms may be census blocks, precincts, or counties, for example.
We consider a language $L\subseteq\Sigma^n$ over the alphabet $\Sigma=[n]\times[k]$.
Here, the symbol $(a,b)\in\Sigma$ should be interpreted as instructions of the form ``assign atom $a$ to district $b$.''
In practice, districts are required to satisfy several constraints, and so we define $L$ to correspond to the choices of districts that satisfy these constraints.
Split the bipartisan commission into two players according to party, and then have them play Utility Ghost to partition a state into districts.
In words, partisan players take turns assigning atoms to districts.\footnote{In a real-world implementation of this game, the player also provides an admissible completion of the current map in order to demonstrate that the selected assignment is allowed.}
In this setup, the $i$th utility function counts the number of districts carried by the $i$th player's party for $i\in\{1,2\}$.
This definition of utility requires perfect information about voter preference, which can be estimated in practice using prior election data.

For the sake of illustration, we play out a small redistricting instance of Utility Ghost.
For this game, the task is to partition a state into two contiguous districts, each composed of three counties.
Take the atoms to be counties, which have the same number of voters.
Four counties vote unanimous-red, and the other two vote unanimous-blue.
Suppose the first player ($P_1$) seeks to maximize the number of districts that are majority-red, while the second player ($P_2$) maximizes majority-blue districts.
What follows is an example of optimal play in this setting:
\[
\begin{array}{|c|c|}
\hline
\cellcolor{red} & \phantom{\mathbf{2}} \cellcolor{red}  \\ \hline
\cellcolor{red}  &  \cellcolor{red}  \\ \hline
\cellcolor{blue} \phantom{\mathbf{2}} &  \cellcolor{blue}  \\ \hline
\end{array}
\stackrel{P_1}{\longrightarrow}
\begin{array}{|c|c|}
\hline
\cellcolor{red} \textcolor{white}{\mathbf{1}} & \phantom{\mathbf{2}} \cellcolor{red}  \\ \hline
\cellcolor{red}  &  \cellcolor{red}  \\ \hline
\cellcolor{blue} \phantom{\mathbf{2}} &  \cellcolor{blue}  \\ \hline
\end{array}
\stackrel{P_2}{\longrightarrow}
\begin{array}{|c|c|}
\hline
\cellcolor{red} \textcolor{white}{\mathbf{1}} & \phantom{\mathbf{2}} \cellcolor{red}  \\ \hline
\cellcolor{red}  &  \cellcolor{red}  \\ \hline
\cellcolor{blue} \textcolor{white}{\mathbf{2}} &  \cellcolor{blue}  \\ \hline
\end{array}
\stackrel{P_1}{\longrightarrow}
\begin{array}{|c|c|}
\hline
\cellcolor{red} \textcolor{white}{\mathbf{1}} & \phantom{\mathbf{2}} \cellcolor{red}  \\ \hline
\cellcolor{red}  & \textcolor{white}{\mathbf{2}} \cellcolor{red}  \\ \hline
\cellcolor{blue} \textcolor{white}{\mathbf{2}} &  \cellcolor{blue}  \\ \hline
\end{array}
\stackrel{P_2}{\longrightarrow}
\begin{array}{|c|c|}
\hline
\cellcolor{red} \textcolor{white}{\mathbf{1}} &  \cellcolor{red}  \\ \hline
\cellcolor{red}  &  \cellcolor{red} \textcolor{white}{\mathbf{2}} \\ \hline
\cellcolor{blue} \textcolor{white}{\mathbf{2}} &  \cellcolor{blue} \textcolor{white}{\mathbf{2}} \\ \hline
\end{array}
\stackrel{P_1}{\longrightarrow}
\begin{array}{|c|c|}
\hline
\cellcolor{red} \textcolor{white}{\mathbf{1}} &  \cellcolor{red} \textcolor{white}{\mathbf{1}} \\ \hline
\cellcolor{red}  &  \cellcolor{red} \textcolor{white}{\mathbf{2}} \\ \hline
\cellcolor{blue} \textcolor{white}{\mathbf{2}} &  \cellcolor{blue} \textcolor{white}{\mathbf{2}} \\ \hline
\end{array}
\stackrel{P_2}{\longrightarrow}
\begin{array}{|c|c|}
\hline
\cellcolor{red} \textcolor{white}{\mathbf{1}} &  \cellcolor{red} \textcolor{white}{\mathbf{1}} \\ \hline
\cellcolor{red} \textcolor{white}{\mathbf{1}}  &  \cellcolor{red} \textcolor{white}{\mathbf{2}} \\ \hline
\cellcolor{blue} \textcolor{white}{\mathbf{2}} &  \cellcolor{blue} \textcolor{white}{\mathbf{2}} \\ \hline
\end{array}
\]
Notice that since both districts must be contiguous, $P_2$'s first move ensures that both blue counties will end up residing in district 2, thereby securing a majority-blue district.

In the next section, we solve redistricting instances of Utility Ghost in which atoms are voters (all of whom have known preference), and districts are only required to contain the same number of voters.
In particular, we prove that both players win the same number of districts when they have the same number of votes.
But how are the results of this game impacted by the sort of geometric constraints we encounter in the real world?
When the game is sufficiently small, it can be naively solved using the standard minimax algorithm.
We followed this approach in two settings:

\subsection{Redistricting a decomino state}

Consider the decomino state depicted in Figure~\ref{fig.nm}, which is made up of 10 square counties of equal size.
Assume that each county contains the same sized population and number of voters, and each county unanimously votes for either $A$ or $B$.
Our task is to partition the decomino state into two contiguous districts of five counties.
Exactly seven maps satisfy these constraints.
When we randomly draw a voter distribution, we may solve the corresponding instance of Utility Ghost (with $A$ as first player) and count the resulting number of majority-$A$ districts.
On average, we obtain a symmetric votes--seats curve (depicted in Figure~\ref{fig.nm}), whereas the curve arising from gerrymandering for $A$ (or $B$) is particularly asymmetric.
Note that since there are only two districts, these gerrymandered results would arise from the I-cut-you-freeze protocol proposed in~\cite{PegdenPY:17} with first player $A$ ($B$, respectively).

\begin{figure}
\begin{center}
\includegraphics[width=0.13\textwidth,trim={3cm 1cm 3cm 1cm},clip]{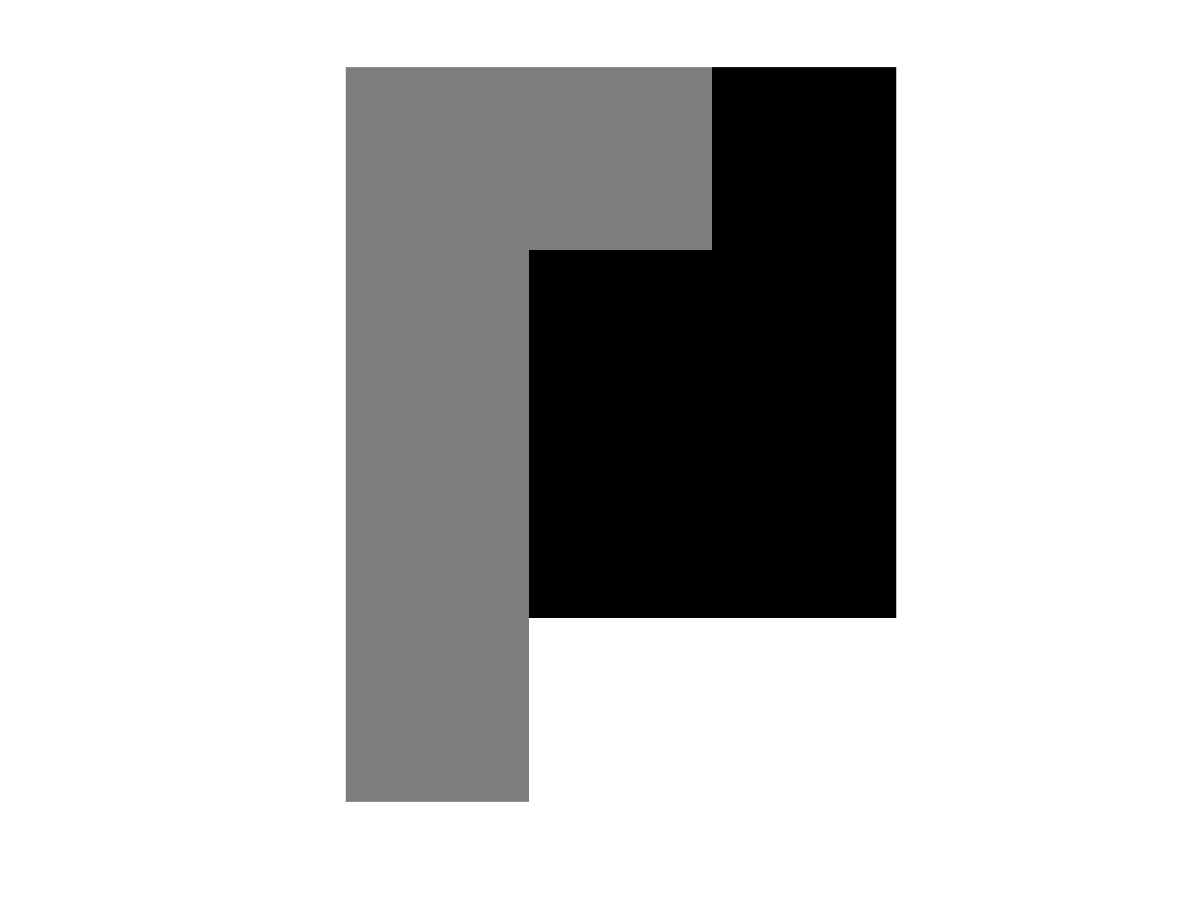}
\includegraphics[width=0.13\textwidth,trim={3cm 1cm 3cm 1cm},clip]{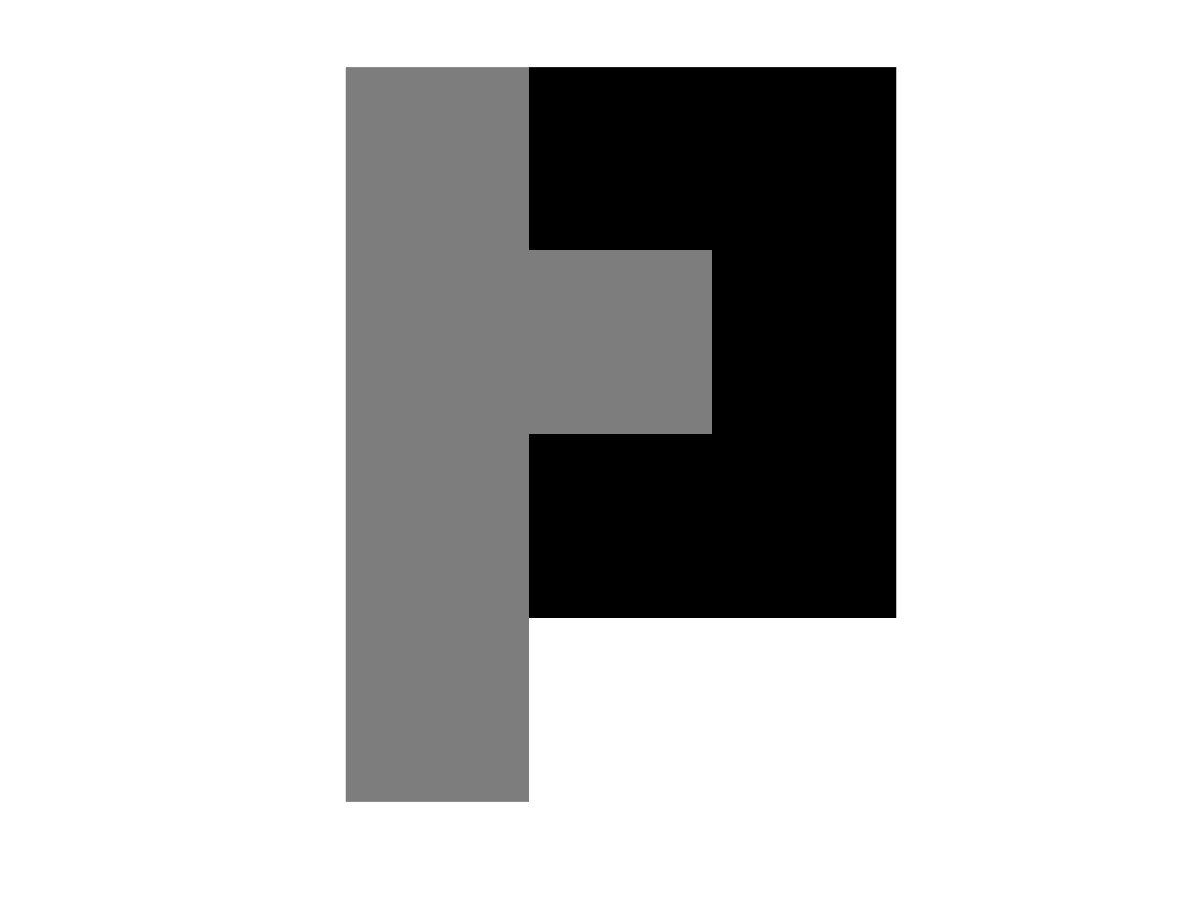}
\includegraphics[width=0.13\textwidth,trim={3cm 1cm 3cm 1cm},clip]{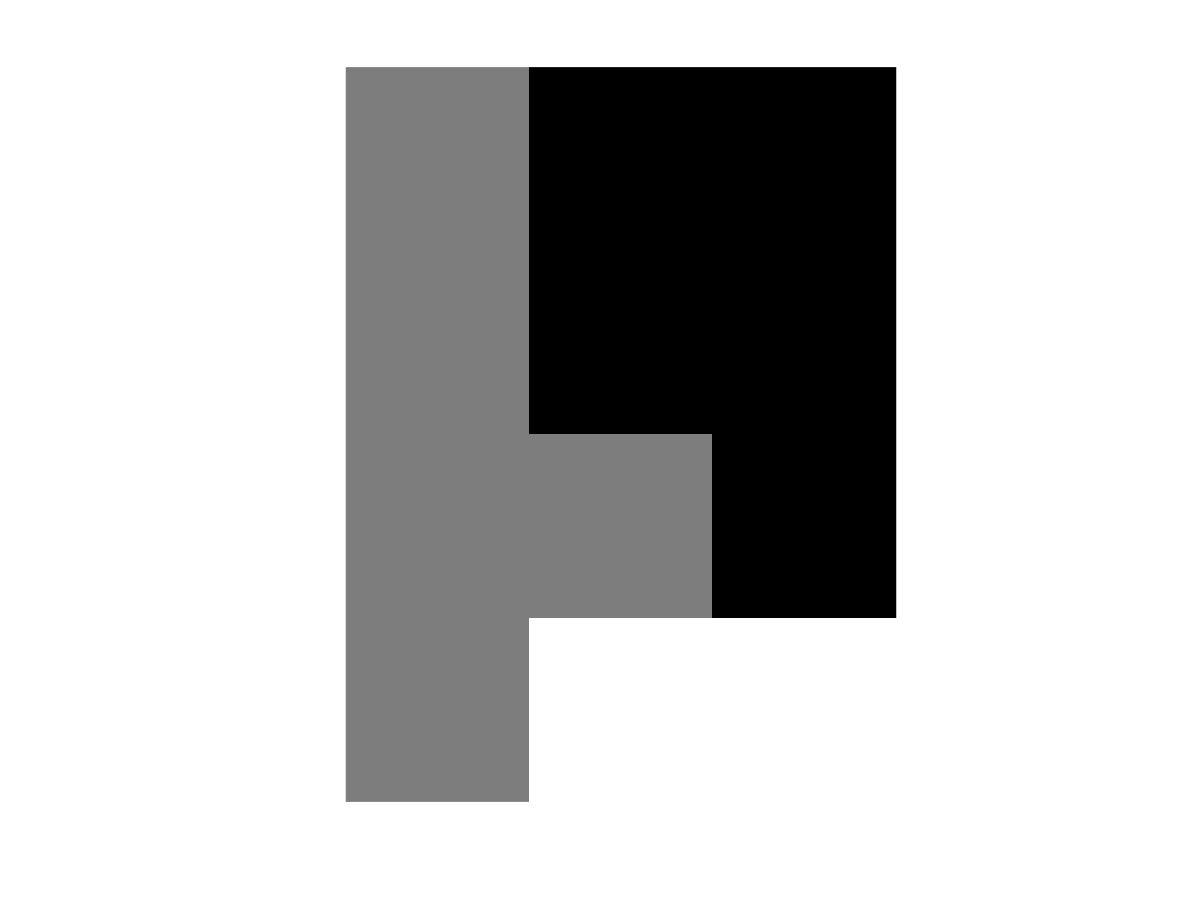}
\includegraphics[width=0.13\textwidth,trim={3cm 1cm 3cm 1cm},clip]{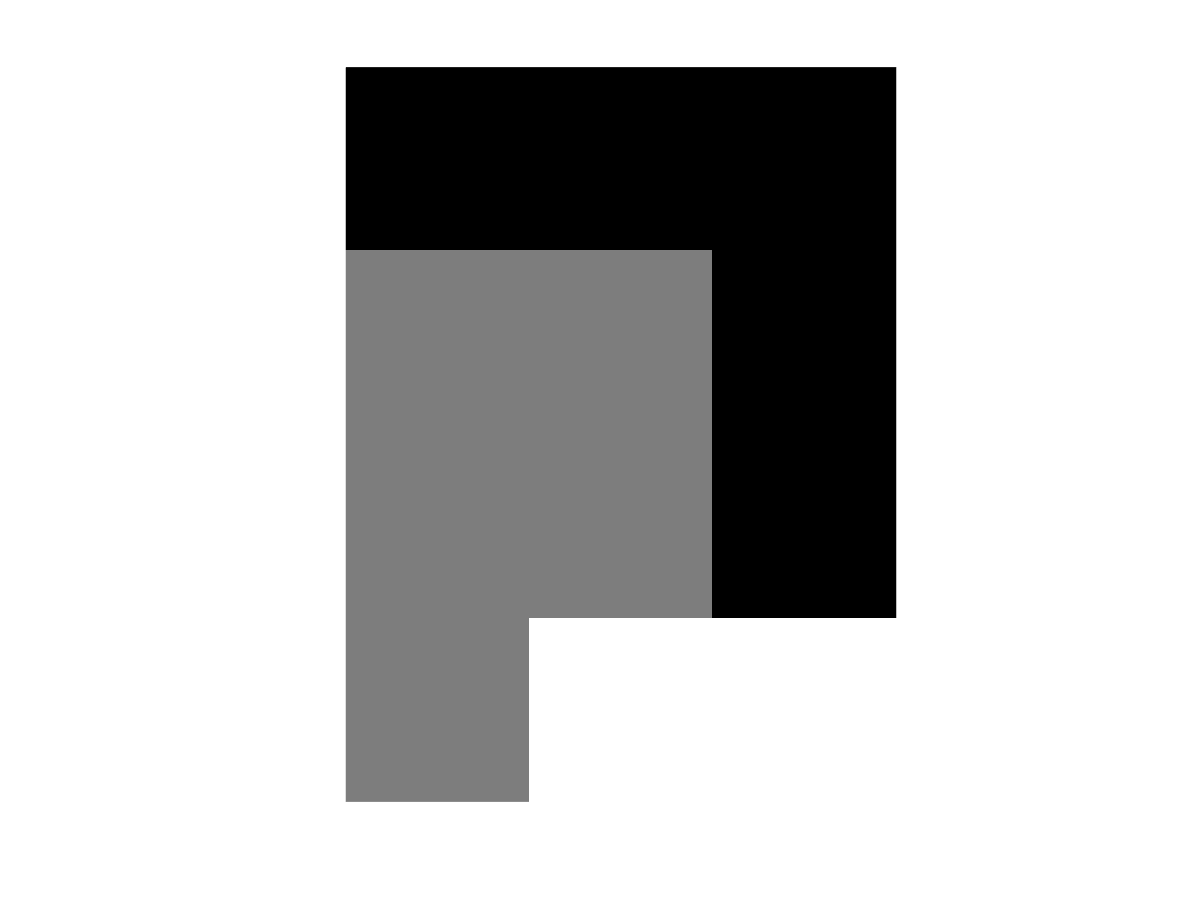}
\includegraphics[width=0.13\textwidth,trim={3cm 1cm 3cm 1cm},clip]{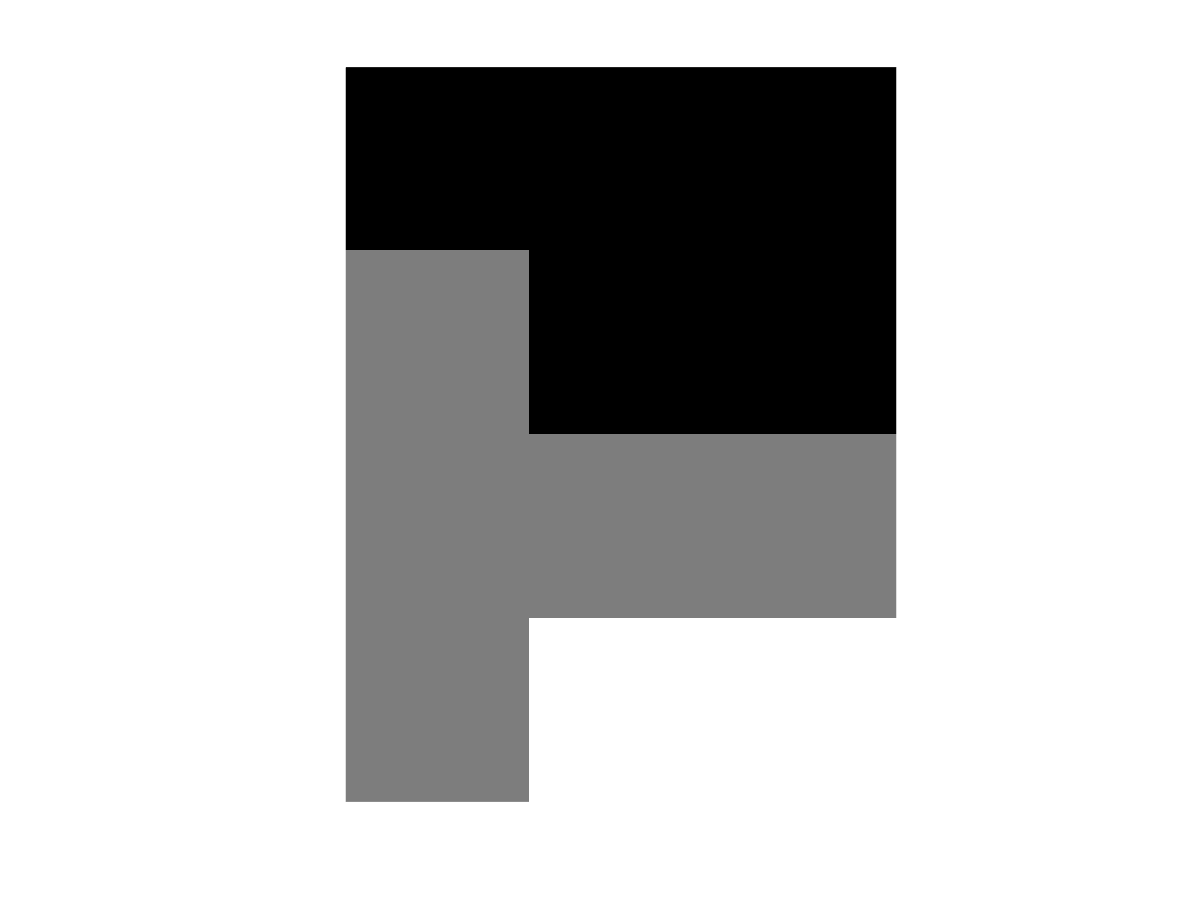}
\includegraphics[width=0.13\textwidth,trim={3cm 1cm 3cm 1cm},clip]{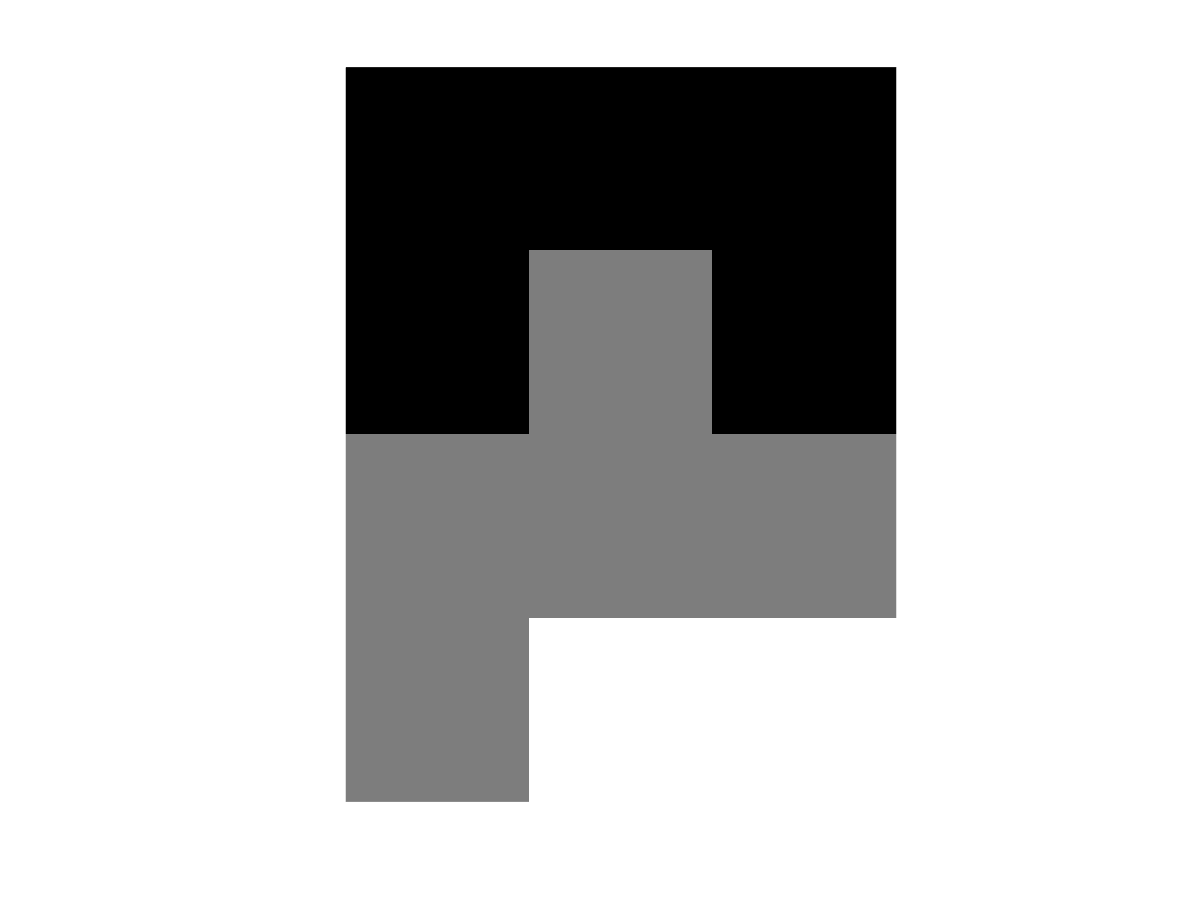}
\includegraphics[width=0.13\textwidth,trim={3cm 1cm 3cm 1cm},clip]{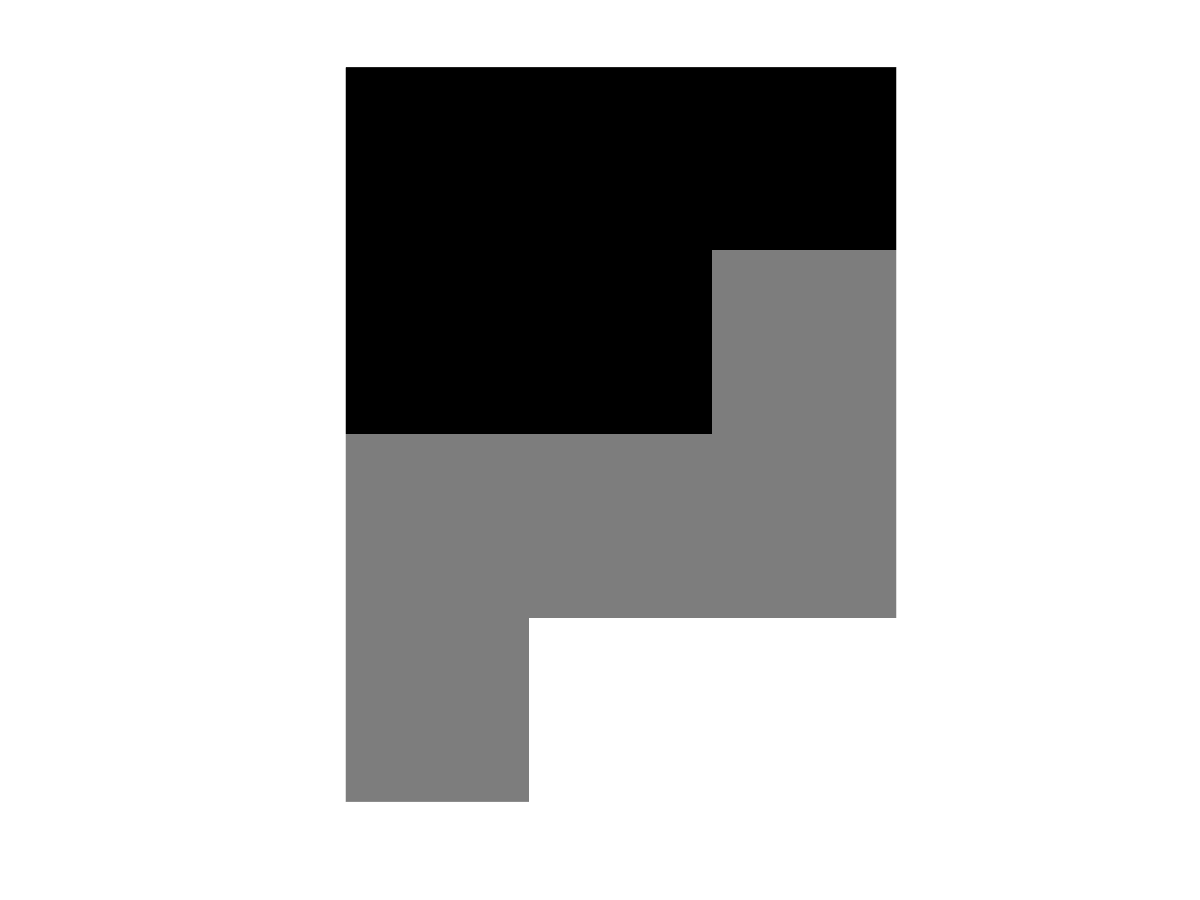}\\

\bigskip

\bigskip

\hspace{1.1cm}\footnotesize{random maps}\hspace{1.6cm}
\footnotesize{maps favoring $A$}\hspace{1.5cm}
\footnotesize{maps favoring $B$}\hspace{1.25cm}
\footnotesize{Utility Ghost maps}\hspace{0.25cm}\\
\rotatebox{90}{\hspace{0.4cm}\footnotesize{number of majority-$A$ districts}}
\includegraphics[width=0.23\textwidth,trim={5cm 1cm 5cm 0.8cm},clip]{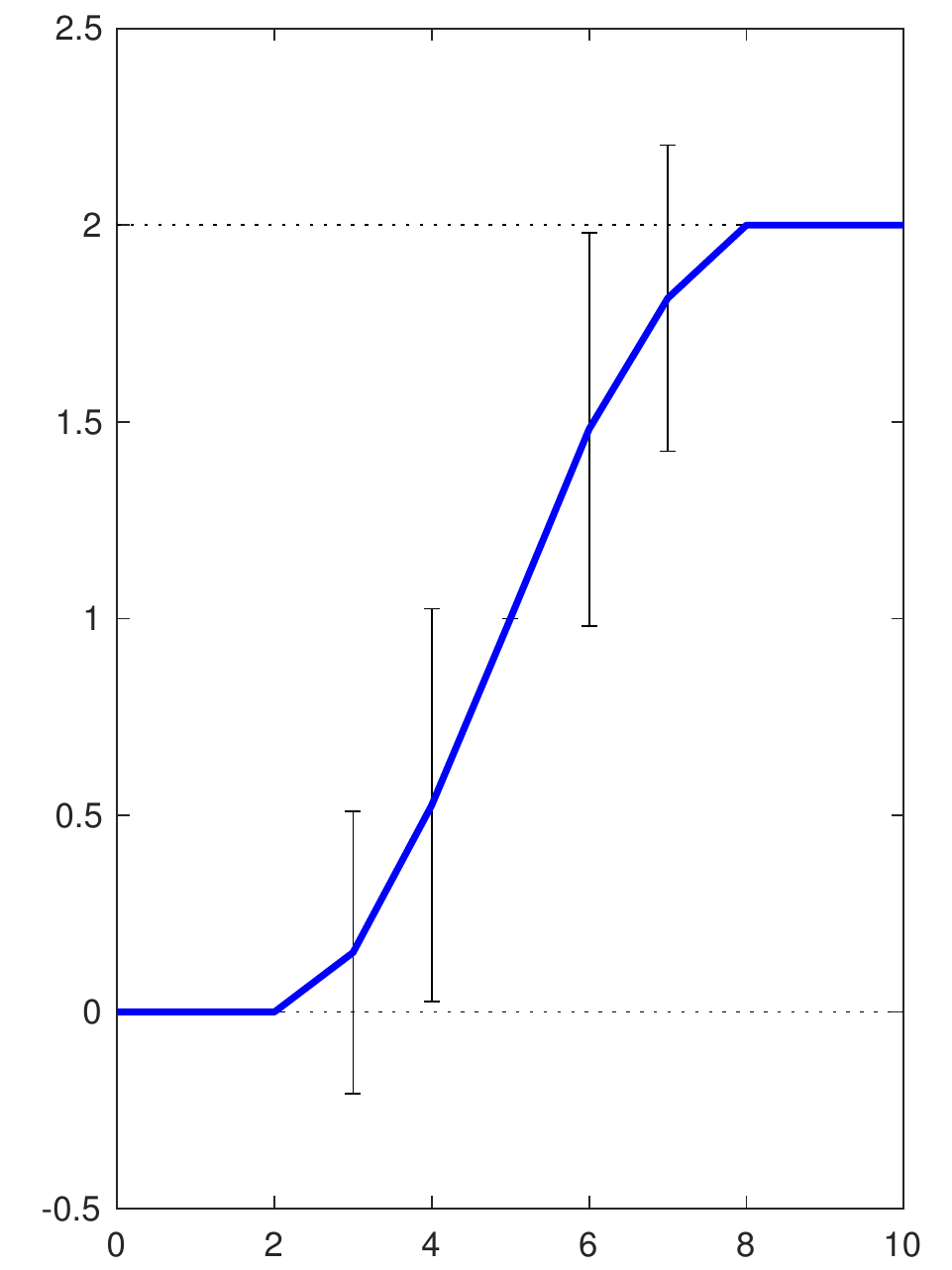}
\includegraphics[width=0.23\textwidth,trim={5cm 1cm 5cm 0.8cm},clip]{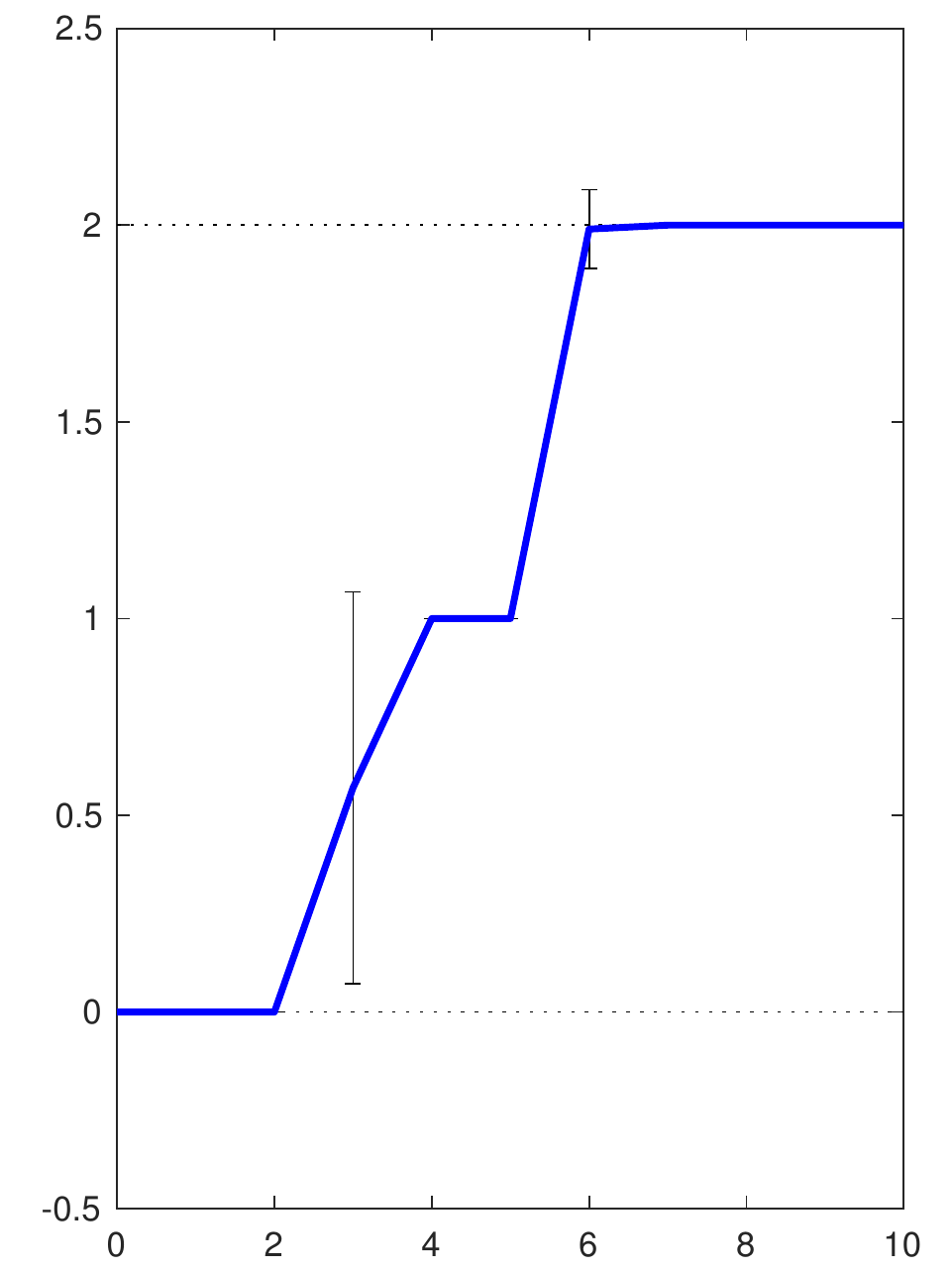}
\includegraphics[width=0.23\textwidth,trim={5cm 1cm 5cm 0.8cm},clip]{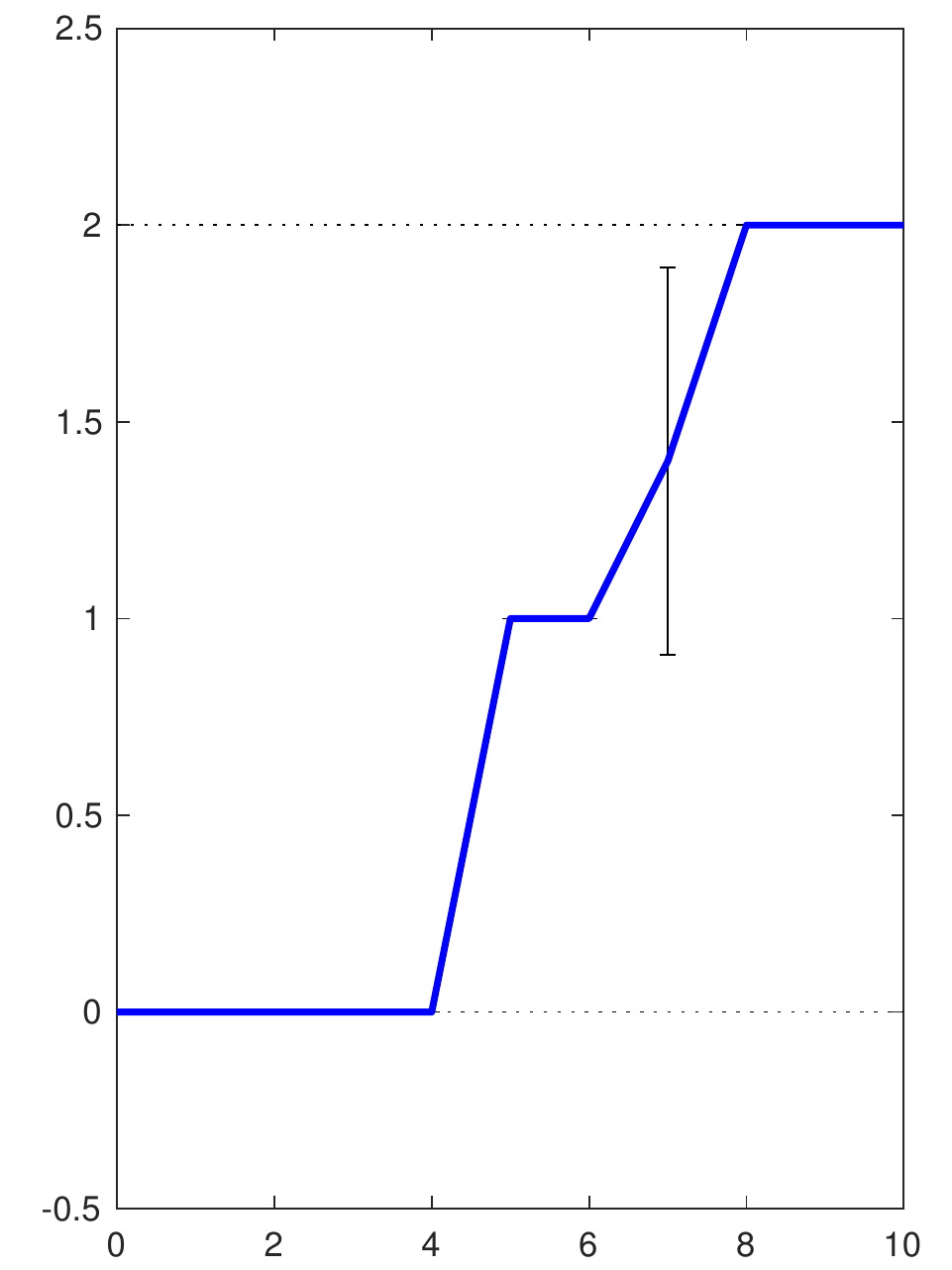}
\includegraphics[width=0.23\textwidth,trim={5cm 1cm 5cm 0.8cm},clip]{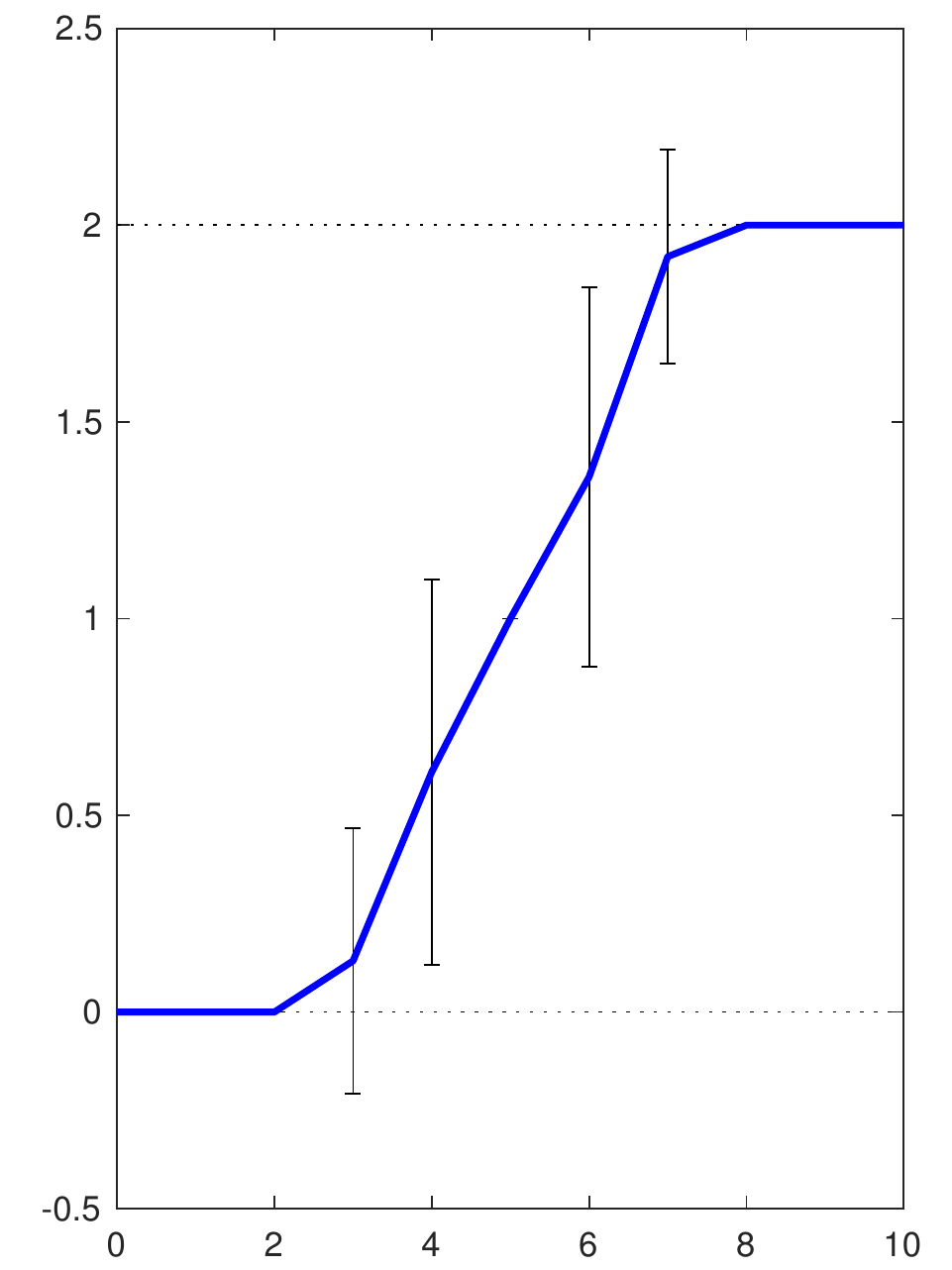}\\
\footnotesize{number of unanimous-$A$ counties}
\end{center}
\caption{\label{fig.nm}
\textbf{(top)}
Consider a state consisting of 10 unit-square counties that must be divided into two voting districts.
By assumption, each county contains the same sized population.
There are only seven districtings that do not split any county while satisfying contiguity and one person--one vote.
The geometry of the state imposes nontrivial structure in these districtings.
For example, the top-right and bottom-left counties are always assigned to different districts.
\textbf{(bottom)}
Let $A$ and $B$ denote the two major parties.
Assume that each county unanimously votes for either $A$ or $B$ and with equal voter turnout.
Fix $x\in\{0,\ldots,10\}$, and consider all possible voter distributions with exactly $x$ unanimous-$A$ counties.
Draw such a voter distribution uniformly at random and perform four experiments:
(1) collect the number of majority-$A$ districts for each of the seven districtings;
(2) identify the maximum number of majority-$A$ districts among the seven districtings;
(3) identify the minimum number of majority-$A$ districts among the seven districtings; and
(4) play Utility Ghost optimally (with $A$ making the first move) to select a districting and then compute the resulting number of majority-$A$ districts.
Perform this experiment for $100$ independent random draws of voter distributions for each $x\in\{0,\ldots,10\}$.
The above plots illustrate the sample mean and standard deviation of (1)--(4), respectively.
In words, the far-left plot represents a votes--seats curve for a random districting, the middle-left plot corresponds to gerrymandering for $A$, middle-right to gerrymandering for $B$, and far-right to the outcome of Utility Ghost.
As expected, random admissible districtings exhibit partisan symmetry.
Utility Ghost appears to mimic this same notion of partisan symmetry, whereas the I-cut-you-freeze protocol~\cite{PegdenPY:17} would produce the gerrymandered results in this case.
}
\end{figure}

\subsection{Redistricting New Hampshire}

New Hampshire is made up of 10 counties and currently has two U.S.\ Congressional voting districts.
If we require each district to be contiguous and comprised of counties, and furthermore, that the difference in district populations be less than 10 percent of the entire population (according to the 2010 U.S.\ Census), then there are seven admissible maps.
Once we know the voter distributions of each county, then we can solve this instance of Utility Ghost.
Here, we used the 2016 presidential election returns as a proxy for voter distribution.
This election was very close, with Hillary Clinton receiving 47.62 percent of the vote and Donald Trump receiving 47.25 percent.
It therefore comes as no surprise that six out of the seven admissible maps produce proportional results under this proxy.
Still, one of the admissible maps gives both seats to the Democrats, and this would be the map selected by the I-cut-you-freeze protocol~\cite{PegdenPY:17} if Democrats play first.
Thankfully, optimal play under the Utility Ghost protocol avoids this map, regardless of which party plays first.

\begin{figure}
\begin{center}
\includegraphics[width=0.11\textwidth,trim={5cm 1cm 5cm 1cm},clip]{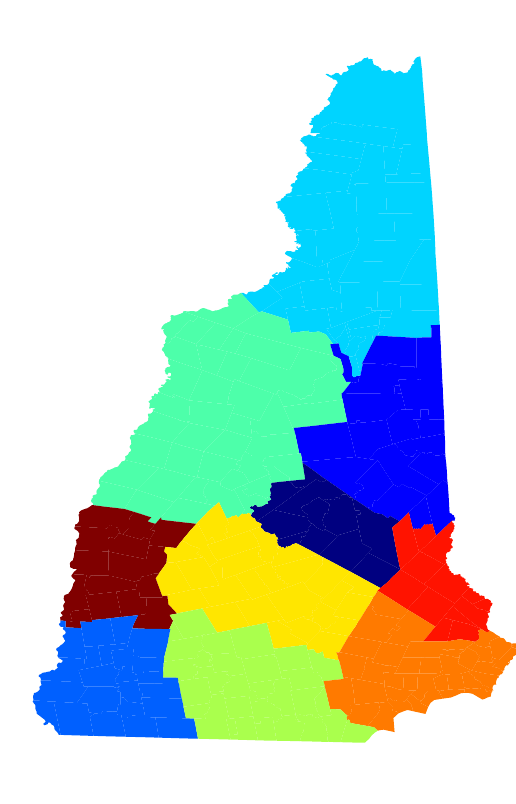}
\includegraphics[width=0.11\textwidth,trim={5cm 1cm 5cm 1cm},clip]{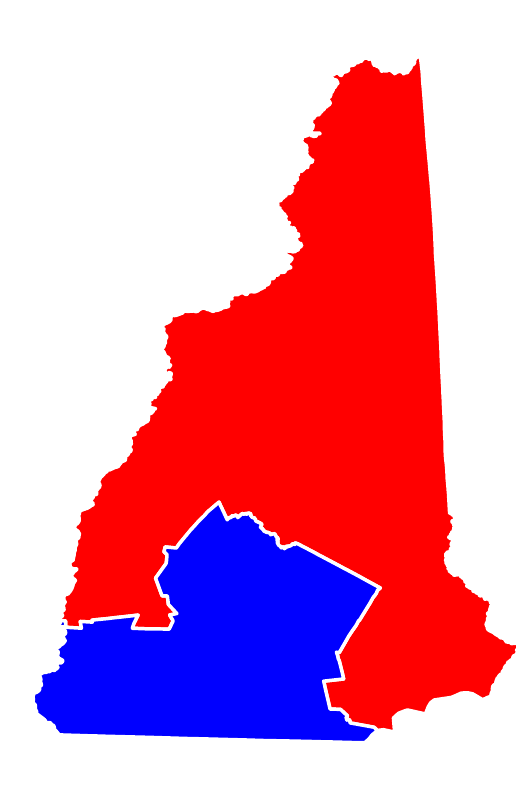}
\includegraphics[width=0.11\textwidth,trim={5cm 1cm 5cm 1cm},clip]{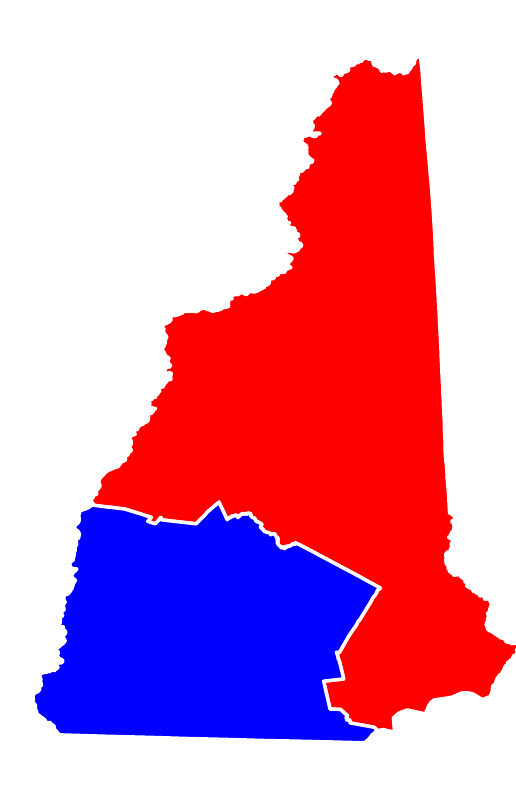}
\includegraphics[width=0.11\textwidth,trim={5cm 1cm 5cm 1cm},clip]{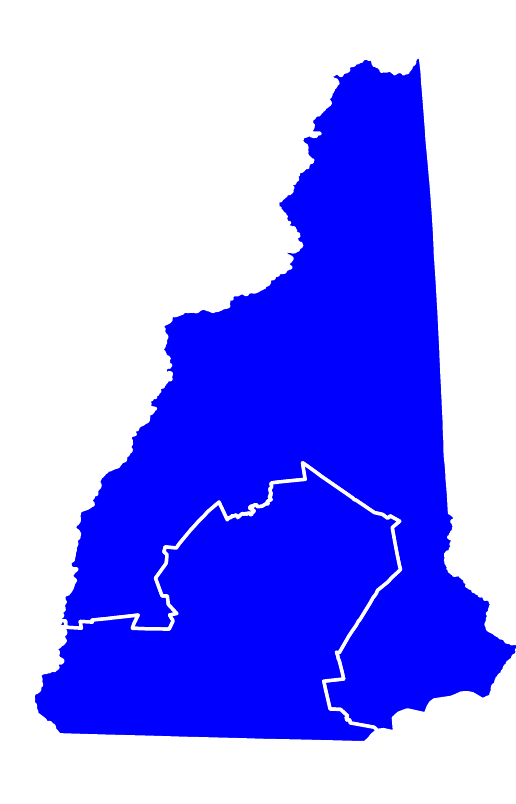}
\includegraphics[width=0.11\textwidth,trim={5cm 1cm 5cm 1cm},clip]{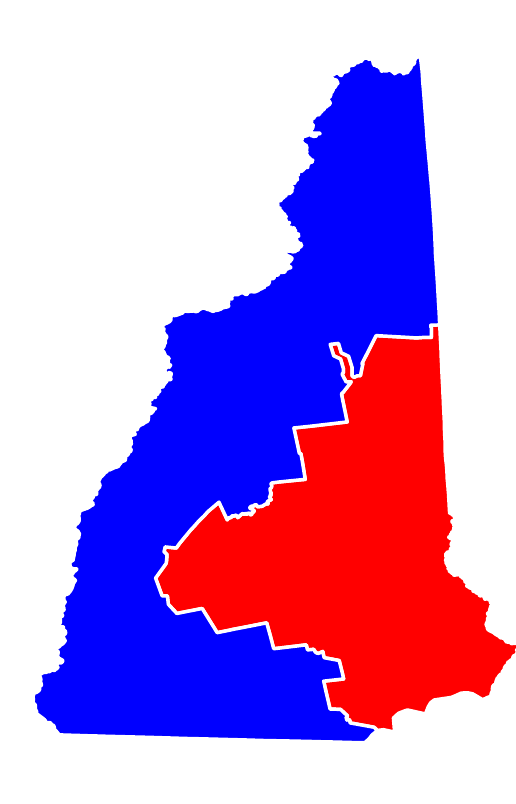}
\includegraphics[width=0.11\textwidth,trim={5cm 1cm 5cm 1cm},clip]{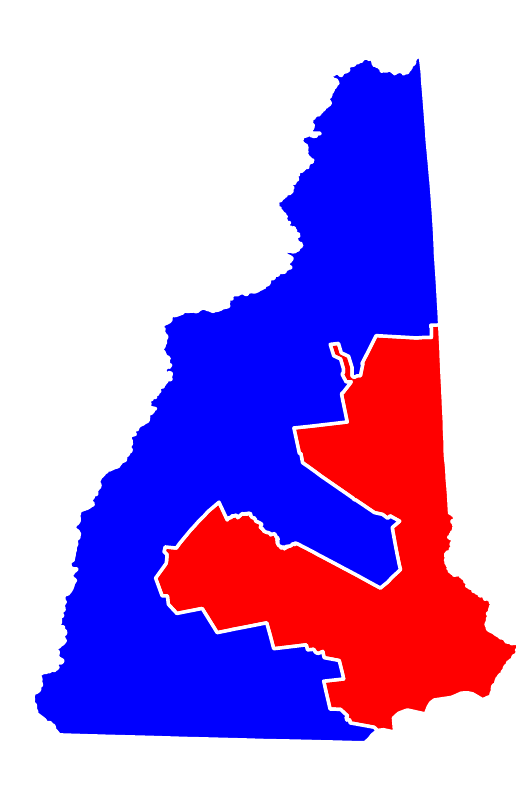}
\includegraphics[width=0.11\textwidth,trim={5cm 1cm 5cm 1cm},clip]{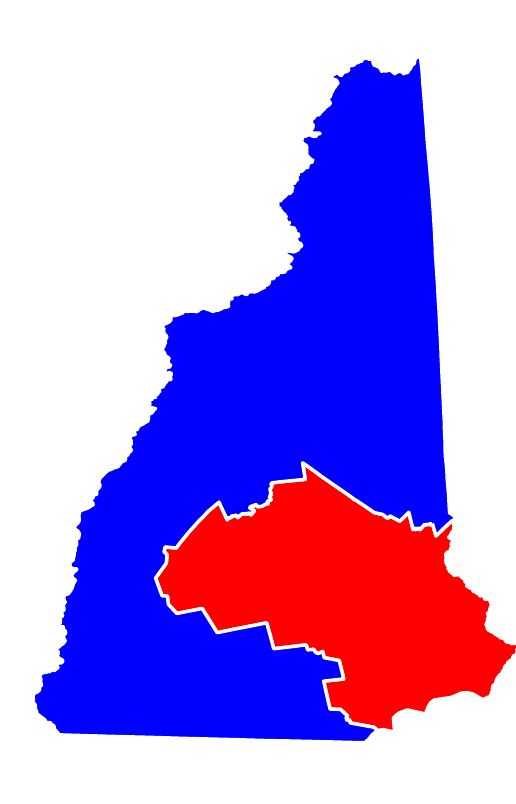}
\includegraphics[width=0.11\textwidth,trim={5cm 1cm 5cm 1cm},clip]{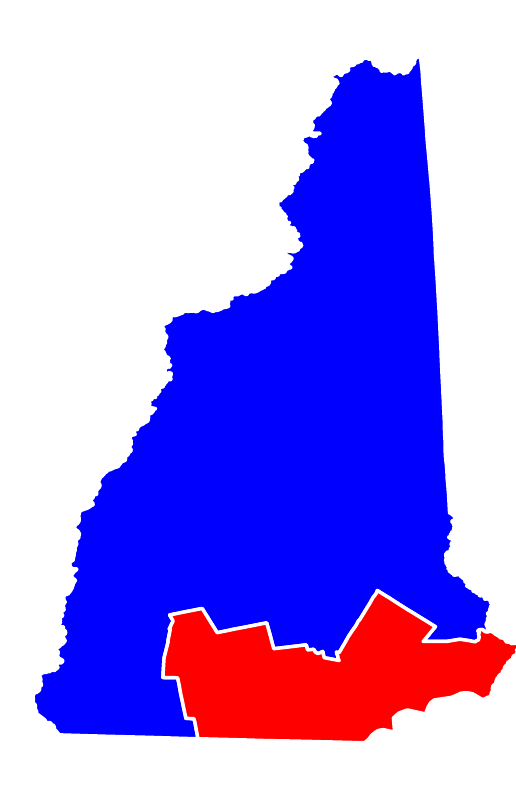}
\end{center}
\caption{\label{fig.nh}
New Hampshire is made up of 10 counties, illustrated on the left.
Currently, New Hampshire has two U.S.\ congressional districts.
Consider all possible partitions of New Hampshire into two contiguous districts such that none of the counties are split, and such that the difference in district populations is less than 10 percent of the entire population (according to the 2010 U.S.\ Census).
There are seven such partitions, illustrated above.
Here, we color the districts according to which party received more votes in the 2016 presidential election.
Using this election data as a proxy for voter preferences, one of the seven admissible districtings results in two congressional seats for the Democrats, whereas the remaining six give a proportional result.
If the Democrats play first in the I-cut-you-freeze protocol~\cite{PegdenPY:17}, then they get to select the districting that wins both seats.
On the other hand, in the Utility Ghost protocol we introduced, Republicans avoid this worst-case map under optimal play, regardless of which party plays first.
}
\end{figure}

\section{Theory}

In this section, we focus on the redistricting instances of Utility Ghost in which atoms are voters (all of whom have known preference), and the districts are only required to contain the same number of voters (i.e., we impose no geometric constraints on the districts).
Furthermore, exactly half of the voters have preference for one party, while the other half have preference for the other party.
How many districts do partisan players win under optimal play?

It is convenient to abstract this particular instance of Utility Ghost in terms of balls and bins.
Fix positive integers $j$ and $m$.
Two players take turns placing one of $n=2j(2m+1)$ balls into one of $k=2j$ bins, each of capacity $2m+1$.
Half of the balls are white, and the other half are black.
By design, $n$ and $k$ are both even, and each bin has odd capacity so that the notion of majority is unambiguous.
The first player ($P_1$) wins a bin if it contains a majority of white balls (at least $m+1$), otherwise the second player ($P_2$) wins that bin.
At the end of the game, the player with the most bins wins.
If both players win half (i.e., $j$) of the bins, the game ends in a tie.
We refer to this game as the \textbf{$(j,m)$-balanced redistricting game}.

\begin{theorem}[Main result]
Suppose $m>14j$.
Then under optimal play, the $(j,m)$-balanced redistricting game ends in a tie.
\end{theorem}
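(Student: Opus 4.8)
The plan is to show that the game value---the number $W$ of white-majority bins produced by optimal play, which is well defined since this is a finite, perfect-information, zero-sum game---equals $j$. A tie is exactly the event $W=j$, so it suffices to establish the two bounds $W\le j$ and $W\ge j$ separately, each by exhibiting a strategy for the appropriate player.

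For the bound $W\le j$ I would give an explicit strategy for the second player $P_2$, and here no hypothesis on $m$ is needed. Fix any fixed-point-free involution $\sigma$ on the $2j$ bins (that is, pair up the bins), and let $P_2$ respond to each move of $P_1$ as follows: if $P_1$ places a ball of color $c$ into bin $X$, then $P_2$ places a ball of the opposite color $\bar{c}$ into the paired bin $\sigma(X)$. The key invariant is that after every $P_2$-move the configuration is color-reversed across $\sigma$: bin $b$ holds as many white balls as $\sigma(b)$ holds black, and vice versa. I would check that this response is always legal. The invariant forces equally many white and black balls to have been placed before each $P_1$-move, so after $P_1$ plays the opposite color is still available; and it forces $|\sigma(X)|=|X|-1\le 2m$ after $P_1$'s move, so $\sigma(X)$ is never full. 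Because $P_2$ is the \emph{second} player, $P_2$ has a response to each of $P_1$'s moves, including the last, so the invariant survives to the full board. Each bin then holds $2m+1$ balls and hence a strict majority, while its partner holds the color-reversed counts; thus exactly one bin of each pair is white-majority, giving $W=j$ and in particular $W\le j$. (The same argument shows that whoever moves second can always force a tie, so the real content of the theorem is that, once $m$ is large, the first player is no longer at a disadvantage.)

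The bound $W\ge j$ is the crux, and is where $m>14j$ enters. The obstruction to reusing the mirror is structural: $P_1$ moves first and $P_2$ moves last, so $P_1$ has nothing to answer on turn one while $P_2$ gets the final, unanswered move; a naive ``free move, then mirror'' leaves $P_1$ short by a bounded amount and can even become illegal once $P_2$ fills the bin $P_1$ wishes to answer in. Instead I would give $P_1$ a robust lead-maintenance strategy relative to a fixed pairing $\sigma$. Writing $\ell_i=w_i-b_i$ for the white lead of bin $i$, one checks that $W\ge j$ holds precisely when the number of pairs that end up entirely white-majority is at least the number that end up entirely black-majority. The plan is then to track a potential $\Phi$ measuring the total ``black pressure'' on the pairs in danger of going entirely black-majority, to have $P_1$ spend each move reducing $\Phi$ at the most endangered pair, and to show that each $P_2$-move raises $\Phi$ by a bounded amount while each $P_1$-move lowers it by at least as much. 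The hypothesis $m>14j$ would be invoked to guarantee that the $2m+1$ rounds available per bin, together with $P_1$'s first-move initiative, furnish enough cushion that $P_2$ can never complete ``both black-majority'' in more pairs than $P_1$ forces ``both white-majority,'' yielding $W\ge j$.

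The hard part will be controlling the coupling among the bins. Unlike in the clean mirror, $P_1$'s single repair per round must simultaneously answer several of $P_2$'s threats, and $P_2$ can combine three kinds of mischief: attacking several pairs at once, dumping the scarce white balls into already-doomed bins so as to starve $P_1$'s future repairs (the two colors are one shared, exhaustible resource), and filling bins precisely to deny $P_1$ a legal white placement where it is needed. The heart of the argument is therefore a simultaneous accounting of all active threats against $P_1$'s one move per round, carried out across the whole game; the linear slack $m>14j$ is what this accounting consumes, and I expect the precise constant to be an artifact of combining these estimates rather than anything essential.
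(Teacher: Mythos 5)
Your reduction of the theorem to the two one-sided bounds (the game is finite, perfect-information, and zero-sum, so it has a value) is sound, and your proof of $W\le j$ is correct and is essentially identical to the paper's: the paper's Lemma~2 has $P_2$ mirror each move with the opposite color in a paired bin, verifies legality by the same parity-and-capacity invariant you state, and needs no hypothesis on $m$. So that half is fine.

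The other half, $W\ge j$, is the entire content of the theorem --- it is the only place $m>14j$ enters --- and there you have a plan, not a proof. You never define the potential $\Phi$, never specify $P_1$'s strategy beyond ``repair the most endangered pair,'' and the two claims that would make the scheme work (each $P_2$-move raises $\Phi$ by a bounded amount, each $P_1$-move lowers it by at least as much) are announced as goals rather than established; you then concede that the ``hard part'' (simultaneous threats, exhaustion of the shared white supply, bins filled to block placements) is open. These are exactly the difficulties the paper's proof resolves, and it does so by a different mechanism that your sketch lacks. In the paper, $P_1$ abandons pairing entirely and maintains a dynamic target set $S_r$ of $j$ bins (the majority-white bins with the most whites, padded with the emptiest bins), almost always playing white into it; the progress measure is $f(r)$, the number of non-wasted white balls (capped at $m+1$ per bin) inside $S_r$. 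Two facts close the argument: first, $f$ increases by at least $1$ every round in which a white ball remains (Lemma~\ref{lem.strictly increasing}); second --- and this is the crux your proposal is missing --- $P_2$ cannot starve $P_1$ of white balls harmlessly, because whites cannot hide: a bin that is not majority-white contains at most as many whites as blacks, so if $P_2$ has played at most $j$ blacks, all but at most $j$ of $P_2$'s whites sit in majority-white bins, and Lemma~\ref{lem.early game} converts a fixed fraction $\frac{j-1}{2j-1}$ of them into credit for $f$. Applying that lemma at round $7j$ (which is what forces $m>2\cdot 7j=14j$) gives $f$ a head start of roughly $3j$, covering the roughly $j/2$ rounds by which the white supply could otherwise run out before $f$ reaches $j(m+1)$. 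Your own worry about $P_2$ ``dumping the scarce white balls into already-doomed bins'' is neutralized by precisely this counting observation, which your proposal does not make; without it, the pair-repair scheme has no defense against white exhaustion, and the proposed proof does not go through.
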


\begin{lemma}
In the $(j,m)$-balanced redistricting game, $P_2$ can force a tie.
\end{lemma}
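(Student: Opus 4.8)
The plan is to exhibit an explicit \emph{pairing strategy} for $P_2$ that unilaterally forces the final tally to be $j$ bins apiece, no matter how $P_1$ plays. Before the game begins, $P_2$ fixes an arbitrary partition of the $2j$ bins into $j$ pairs $\{C_1,C_1'\},\dots,\{C_j,C_j'\}$. Thereafter $P_2$ plays by \emph{mirroring}: whenever $P_1$ places a ball of a given color into a bin, $P_2$ responds by placing a ball of the \emph{opposite} color into the partner of that bin. The point of this rule is the invariant it maintains. Writing $d_B$ for the number of white balls minus the number of black balls currently in bin $B$, after each of $P_2$'s replies we have $d_C + d_{C'} = 0$ for every pair $\{C,C'\}$, since $P_1$'s move and $P_2$'s reply shift $d_C$ and $d_{C'}$ by opposite amounts.

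First I would record two bookkeeping invariants that hold after each of $P_2$'s replies and that guarantee the mirroring rule never calls for an illegal move. (i) \emph{Equal fill within pairs}: each round that touches a pair increments the occupancies of $C$ and $C'$ by one each, so $C$ and $C'$ always contain equally many balls. (ii) \emph{Color balance}: each round consists of one ball of each color, so the numbers of white and black balls played so far are always equal. I would then check legality of $P_2$'s reply. For capacity: when $P_1$ plays into a bin $C$, invariant (i) (holding just before $P_1$'s move) gives that the partner $C'$ held as many balls as $C$ did; since $C$ was not full, $C'$ is not full either, so $P_2$ has room to place in $C'$. For color availability: by (ii), equally many white and black balls remain just before $P_1$'s move, so after $P_1$ consumes one ball at least one ball of the opposite color is still available for $P_2$. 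Hence $P_2$'s prescribed reply is always legal and $P_2$ never gets stuck.

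Finally I would read off the outcome. Since $n=2j(2m+1)$ is even and $P_2$ moves second, $P_2$ makes the last move, so the invariant $d_C+d_{C'}=0$ holds at the end of play, when every bin is full. For each pair this forces $d_C=-d_{C'}$; as each bin holds $2m+1$ balls, $d_C$ and $d_{C'}$ are nonzero odd integers of opposite sign, so exactly one of $C,C'$ has a white majority and the other a black majority. Thus $P_2$ wins exactly one bin in each of the $j$ pairs, for a final score of $j$ to $j$: a tie. I expect the only real obstacle to be the feasibility check of the previous paragraph---confirming that the mirroring rule can always be executed---which is exactly what invariants (i) and (ii) are designed to handle; once feasibility is secured, the tie follows immediately from the signed-imbalance invariant. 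Note that this argument makes no use of the hypothesis $m>14j$, consistent with its absence from the lemma's statement.
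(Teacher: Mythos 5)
Your proof is correct and takes essentially the same approach as the paper: the paper labels the bins $(a,b)$ with $a\in\{\pm1\}$, $b\in[j]$ (i.e., fixes a pairing) and has $P_2$ answer each move in $(a,b)$ with the opposite color in $(-a,b)$, maintaining exactly your color-balance and mirror-image invariants. Your writeup just spells out the legality check (capacity and ball availability) and the final odd-capacity counting argument in more detail than the paper's brief induction.
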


\begin{proof}
Arbitrarily label the bins with $\{(a,b)\}_{a\in\{\pm1\},b\in[j]}$.
We propose the following strategy for $P_2$:
At each round, $P_1$ puts a ball of some color in bin $(a,b)$, and $P_2$ responds by putting a ball of the opposite color in bin $(-a,b)$.
We claim that such moves are always possible, and that under this play, the game ends in a tie.
To this end, a simple induction argument gives both of the following:
\begin{itemize}
\item[(i)]
After each round, the number of white balls remaining equals the number of black balls remaining.
\item[(ii)]
After each round, the number of white/black balls in $(a,b)$ equals the number of black/white balls in $(-a,b)$ for every $a\in\{\pm1\}$ and $b\in[j]$.
\end{itemize}
By (i), the fact that $P_1$'s move was possible in a given round implies that $P_2$'s move is also possible.
The desired tie follows from applying (ii) to the last round.
\end{proof}

\begin{lemma}
\label{lem.main}
Suppose $m>14j$.
Then in the $(j,m)$-balanced redistricting game, $P_1$ can force a tie.
\end{lemma}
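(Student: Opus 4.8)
The plan is to exhibit a strategy for $P_1$ that guarantees at least $j$ white-majority bins; combined with the previous lemma (in which $P_2$ holds $P_1$ to at most $j$ bins), this pins the value of optimal play at exactly $j$ bins apiece and yields the Main result. Reusing the labels $\{(a,b) : a\in\{\pm1\},\, b\in[j]\}$, I would have $P_1$ play a \emph{delayed mirror}: on move $1$, place a white ball in $(+1,1)$; thereafter, whenever $P_2$ places a ball of color $c$ in $(a,b)$, respond with a ball of the opposite color in $(-a,b)$.

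First I would analyze the idealized case in which this mirror is never obstructed. A short induction shows that for every pair $b$ the bins $(+1,b)$ and $(-1,b)$ receive mirror-image contents, so each pair accumulates exactly $2m+1$ white balls; since $2m+1$ is odd, exactly one bin per pair ends white-majority, giving $P_1$ exactly $j$ bins. The two asymmetries, namely $P_1$'s unmatched opening move and $P_2$'s unmatched final move, I would dispatch by a parity argument: the last ball and last empty slot are forced, and both asymmetries land in pair $1$, where the extra white and the forced extra black cancel, leaving the white total of every pair equal to $2m+1$. Thus, absent obstructions, the delayed mirror already forces a tie.

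The real content is that the mirror can be \emph{obstructed}: $P_1$ may be directed into a bin that is already full, or asked for a color that has been exhausted. I would show that the full-bin obstruction cannot occur at a pair whose two bins hold equal counts, so it can only strike the unique pair carrying a one-ball imbalance (initially pair $1$), while color availability is governed by a global parity of the remaining balls. When an imbalanced pair fills and forces a deviation, $P_1$ plays a repair move in a fresh pair and re-designates, transferring the imbalance; the key quantitative claim is that the total number of deviations, together with the total displacement of white balls away from the intended per-pair winners, is bounded by a fixed multiple of $j$.

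The main obstacle is converting this bound on displacement into a bound on the \emph{count} of white-majority bins: in clean play a winning bin may clear the threshold by only a single ball, so an $O(j)$ perturbation could a priori flip $O(j)$ pairs against $P_1$. The plan is therefore to have $P_1$ spend the slack afforded by $m>14j$ to keep a safety margin, ensuring that in all but a controlled set of pairs the designated bin holds at least $m+1$ plus the total possible perturbation in white balls, and to verify that repair moves never create a net deficit of white-majority bins. Checking that the accumulated imbalance stays strictly below $m$, and that tallying the sources of imbalance produces the explicit constant $14$, is the crux and the step I expect to demand the most careful bookkeeping; the remaining inequalities should then be routine.
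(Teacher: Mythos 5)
Your proposal takes a genuinely different route from the paper: there, $P_1$ plays a \emph{concentration} strategy, pouring white balls into a dynamically re-selected set $S_{r,1}$ of $j$ target bins and answering $P_2$'s black balls inside the target set with white balls; the analysis tracks a potential $f(r)$ (the number of ``non-wasted'' white balls in the target set), shows $f$ strictly increases every round while white balls remain, and uses an early-game lemma to give $f$ a lead of $j$ that covers the final rounds in which white balls may be exhausted. Your delayed mirror is a different idea, and its idealized core (mirror-image contents force each pair to split $1$--$1$, with the two unmatched moves cancelling in pair $1$) is correct. But the proposal is not a proof: the step you yourself call ``the crux''---converting an $O(j)$ bound on deviations into the conclusion that $P_1$ still wins $j$ bins---is exactly what is missing, and it is not mere bookkeeping.

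The obstacle is structural, not technical. In the delayed mirror, every move of $P_1$ except the opening and the $O(j)$ repair moves is dictated by $P_2$; therefore $P_2$, not $P_1$, controls every margin. For example, by alternating colors within a single bin $P_2$ can arrange that in \emph{every} pair the white-majority bin wins by exactly one ball ($m+1$ whites against $m$ blacks). Consequently your plan to ``keep a safety margin'' of $m+1$ plus the total perturbation in designated bins is incompatible with the strategy you defined: a player with only $O(j)$ discretionary moves out of $j(2m+1)$ has nowhere to spend the slack $m>14j$. And $P_2$ can make the perturbations bite: by choosing white on obstruction moves, $P_2$ exhausts the white balls up to roughly $j$ rounds before the game ends, after which every black ball $P_2$ plays forces $P_1$ to answer with a black ball; you give no argument that a ``safe'' receptacle (a bin already holding $m+1$ black balls but not yet full) always exists for these forced blacks, and in razor-thin pairs a single misplaced black threatens to turn a $1$--$1$ pair into a $0$--$2$ pair. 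Repairing this seems to require abandoning pure mirroring in favor of a strategy whose intended winning bins are filled almost exclusively with white balls---which is precisely what the paper's $S_{r,1}$-based strategy and non-wasted-ball potential accomplish.
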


Before describing a strategy for $P_1$, we need an important definition.
Immediately before $P_i$'s turn in round $r$, let $l$ be the number of majority-white bins, and select any $\min\{j,l\}$ of these bins with the most white balls along with any $\max\{j-l,0\}$ additional bins with the fewest total balls.
Let $S_{r,i}$ denote the set comprised of these $j$ bins.
Table~\ref{table.strategy} uses this definition to prescribe a strategy for $P_1$, and as we will show, this strategy forces a tie.
First, we show that this strategy prescribes legal moves for $P_1$:

\begin{table}[t]
\begin{framed}
If in round $r$, there exists a bin in $S_{r,1}$ that contains fewer than $m+1$ white balls, and furthermore, there is a white ball available to play, then:
\begin{itemize}
\item
If $r=1$, then $P_1$ plays the default move.
\item
If in round $r-1$, $P_2$ played white, then $P_1$ plays the default move.
\item
If in round $r-1$, $P_2$ played black in some bin $b\not\in S_{r-1,2}$, then:
\begin{itemize}
\item
If $b$ contains exactly one ball and there exists an empty bin $b'\in S_{r,1}$, then $P_1$ plays white in $b'$.
\item
Otherwise, $P_1$ plays the default move.
\end{itemize}
\item
If in round $r-1$, $P_2$ played black in some bin $b\in S_{r-1,2}$, then:
\begin{itemize}
\item
If $b$ contains exactly one ball and there exists an empty bin $b'\in S_{r,1}$, then $P_1$ plays white in $b'$.
\item
Else if $b$ contains fewer than $m+1$ white balls, then $P_1$ plays white in $b$.
\item
Otherwise, $P_1$ plays the default move.
\end{itemize}
\end{itemize}
Otherwise, $P_1$ plays arbitrarily.
\end{framed}
\caption{\label{table.strategy}Strategy for $P_1$ in $(j,m)$-balanced redistricting game. We say $P_1$ plays ``the default move'' in round $r$ if he plays white in any of the most full bins in $S_{r,1}$ with fewer than $m+1$ white balls.}
\end{table}

\begin{lemma}
For every round, Table~\ref{table.strategy} determines a legal move for $P_1$.
\end{lemma}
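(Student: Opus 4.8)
The plan is to reduce legality to a single occupancy condition and then verify it case by case. A move is legal precisely when the ball $P_1$ plays is still available and the target bin is not yet full. The strategy in Table~\ref{table.strategy} only ever instructs $P_1$ to play a \emph{white} ball, and the governing precondition explicitly requires ``there is a white ball available to play''; in the residual ``otherwise'' branch $P_1$ ``plays arbitrarily,'' which is legal so long as the game is not over, i.e.\ so long as some ball and some empty slot remain. Thus the entire content of the lemma is the claim that whenever the strategy names a specific bin for a white ball---the default bin, the empty bin $b'$, or the bin $b$ in which $P_2$ just played black---that bin contains fewer than $2m+1$ balls.

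Two of the three targets are immediate. When $P_1$ plays white in the empty bin $b'\in S_{r,1}$, the target holds $0<2m+1$ balls. For the default move, $P_1$ plays in a bin of $S_{r,1}$ with fewer than $m+1$ white balls; I first observe that the existence of such a bin forces $l<j$, since if $l\ge j$ then $S_{r,1}$ consists entirely of majority-white bins, each with at least $m+1$ white balls, contradicting the precondition. Hence the named bin is one of the ``padding'' bins, chosen among the not-yet-majority-white bins to have the fewest total balls, and it remains only to rule out that even this minimally occupied bin is full.

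This is where the argument has real content, and it is the step I expect to be the main obstacle. A naive ball count does not suffice: one can write down arithmetically consistent positions---for instance $l=j-1$ majority-white bins holding exactly $m+1$ balls each, together with $j+1$ not-yet-majority-white bins that are all full---in which the fewest-occupied padding bin is nonetheless full, so the desired conclusion is false for arbitrary positions and can hold only for positions reachable under $P_1$'s prescribed play. The plan is therefore to prove, by induction on the round $r$, a structural invariant ruling out such positions. The invariant I would carry is that, immediately before each of $P_1$'s moves, every bin $P_1$ has contested (ever played white in) that is not yet majority-white has at least as many white balls as black balls; consequently its total is at most twice its white count, hence at most $2m<2m+1$, so it is never full. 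This settles the third target $b$ directly, since such a contested, not-yet-majority-white bin then has at most $2m$ balls when $P_1$ is told to answer in it.

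The hard part is the inductive step, and specifically the rounds in which $P_1$'s reply does \emph{not} land in the bin $P_2$ just played. The responsive rule ``if $P_2$ played black in $b\in S_{r-1,2}$, then $P_1$ plays white in $b$'' exactly restores the white--black balance and preserves the invariant, and when $P_2$ merely opens a fresh bin with one black ball the affected bin is uncontested so the invariant is vacuous there. The delicate situation is a contested bin that has drifted \emph{outside} the current target set $S_{r-1,2}$: if $P_2$ plays black in it while $P_1$ makes the default move or opens a new $b'$, that black ball goes unanswered and threatens the invariant in that bin. Here I expect the hypothesis $m>14j$ to do the work. Since there are only $2j$ bins, $P_1$ opens new bins only $O(j)$ times and can fall behind in the balance only an $O(j)$ amount in total, while each contested bin has on the order of $m$ rounds of slack before it could fill. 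The plan is to track a global potential measuring the total black-minus-white surplus over contested not-yet-majority-white bins, show it rises by a bounded amount per deviation and is pushed back down by the responsive moves, and conclude it never reaches the threshold at which any single contested bin becomes majority-black-and-full. With the invariant established, all three named targets hold at most $2m$ balls, and the reduction of the first paragraph completes the proof.
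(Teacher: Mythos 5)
Your opening reduction---legality means the named bin is not yet full---and your recognition that a naive ball count fails, so that one needs an inductive structural invariant about positions reachable under $P_1$'s play, both match the shape of the paper's proof (which inducts on the round and splits into the same three cases according to $P_2$'s move). But two things go wrong. First, you misread ``majority-white'': in the paper it means the bin currently contains strictly more white than black balls (a bin holding a single white ball is majority-white), not that it contains at least $m+1$ white balls. Hence the precondition does not force $l<j$, and the default target is typically not a ``padding'' bin but a majority-white bin with fewer than $m+1$ white balls---which is automatically non-full, since white $\leq m$ and black $<$ white give total $\leq 2m-1$. The bins that genuinely threaten legality are the \emph{non}-majority-white bins of the current target set, and the invariant the paper carries speaks exactly to them: (ii) every bin of $S_{r,2}$ that is not majority-white is \emph{empty}, together with (i) the number of empty bins inside $S_{r,2}$ is at most the number of empty bins outside it. Your invariant is instead attached to ``contested'' bins (bins $P_1$ has ever played in), which is the wrong object: it says nothing about uncontested bins that enter the target set, and those can be named by the default move or by the responsive rule.

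Second, your invariant is not actually maintained by the strategy, and your proposed repair does not close the gap. A contested majority-white bin can fall out of the target set (once more than $j$ majority-white bins exist and it is not among the top $j$ by white count, e.g.\ after $P_2$ plays white elsewhere); from then on $P_2$ may play black in it every round, and Table~\ref{table.strategy} never answers (the bin lies outside $S_{r-1,2}$ and holds more than one ball), so it eventually becomes majority-black while contested---violating your invariant. You flag exactly this as the delicate case, but your fix is only a sketch, and its key premise is false: unanswered black balls are not tied to $P_1$'s $O(j)$ bin-openings, since $P_2$ can devote essentially all of her roughly $j(2m+1)$ moves to such attacks, so the ``black-minus-white surplus'' is not bounded by $O(j)$. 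The fix also leans on $m>14j$, a hypothesis this lemma does not have: the paper proves legality unconditionally for every $j,m$, and $m>14j$ enters only later (in Lemma~\ref{lem.main}, via Lemma~\ref{lem.early game}). What your setup misses is that such corrupted out-of-target bins are harmless: $P_1$ only ever plays into the current target set or into a bin of $S_{r-1,2}$, so it suffices to control those bins, and the paper's invariants (i)--(ii) survive the three-case analysis of $P_2$'s move and immediately yield legality in every case.
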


\begin{proof}
We only need to consider rounds $r$ for which there exists a bin in $S_{r,1}$ that contains fewer than $m+1$ white balls, and furthermore, a white ball is available for $P_1$ to play.
Let $A_{r,i}$ and $B_{r,i}$ denote the number of empty bins in $S_{r,i}$ and $S_{r,i}^c$, respectively.
We will use induction to prove that the following hold simultaneously:
\begin{itemize}
\item[(i)]
$A_{r,2}\leq B_{r,2}$.
\item[(ii)]
The bins in $S_{r,2}$ that are not majority-white are empty.
\item[(iii)]
In round $r$, the above strategy determines a legal move for $P_1$.
\end{itemize}
The claim clearly holds for $r=1$.
Now suppose it holds for a given $r\geq1$.

\textbf{Case I:} In round $r$, $P_2$ plays white.
If $P_2$ plays white in an empty bin $b$, then this bin becomes majority-white, and so by (ii) of the induction hypothesis (which we denote IH(ii) in the sequel), $b\in S_{r+1,1}$ unless all of the bins in $S_{r,2}$ are already majority-white.
After $P_1$ plays the default move, we either have $A_{r+1,2}=0\leq B_{r+1,2}$ or 
\[
A_{r+1,2}
\leq A_{r+1,1}
=A_{r,2}-1
\leq B_{r,2}-1
=B_{r+1,2}-1
\leq B_{r+1,2},
\]
where the second inequality applies IH(i).
If $P_2$ plays white in a nonempty bin, then IH(ii) implies $S_{r+1,1}=S_{r,2}$.
After $P_1$ plays the default move, we then have $A_{r+1,2}\leq A_{r,2}\leq B_{r,2}=B_{r+1,2}$, where the second inequality applies IH(i).
This proves (i).
Next, (ii) continues to hold for round $r+1$ since the balls played by $P_2$ and $P_1$ are both white. 
For (iii), note that IH(ii) and the above discussion together gives that the bins in $S_{r+1,1}$ are all either majority-white or empty.
Since by assumption, there exists a bin in $S_{r+1,1}$ that contains fewer than $n+1$ white balls, the default move is legal.

\textbf{Case II:} In round $r$, $P_2$ plays black in some bin $b\not\in S_{r,2}$.
Then $P_1$ responds in such a way that $S_{r+1,2}=S_{r+1,1}=S_{r,2}$.
If $P_2$ plays black in an empty bin so that $B_{r+1,1}=B_{r,2}-1$, then either $P_1$ plays white in an empty bin to make $A_{r+1,2}=A_{r+1,1}-1$, or there is no empty bin available, meaning $A_{r+1,2}=0\leq B_{r+1,2}$.
The former case gives
\[
A_{r+1,2}
=A_{r+1,1}-1
=A_{r,2}-1
\leq B_{r,2}-1
=B_{r+1,2}
=B_{r+1,2},
\]
where the inequality follows from IH(i).
Overall, we have (i).
Next, there is only one new ball in $S_{r+1,2}=S_{r+1,1}=S_{r,2}$, and it is white, so (ii) continues to hold.
Finally, (iii) follows from IH(ii), as in the previous case.

\textbf{Case III:} In round $r$, $P_2$ plays black in some bin $b\in S_{r,2}$.
If $P_2$ plays black in an empty bin, then by IH(i), there must be an empty bin outside $S_{r,2}$, resulting in an empty bin $b'\in S_{r+1,1}$, where $P_1$ plays white.
As such, (i)--(iii) continue to hold in this subcase.
If $b$ was nonempty before $P_2$ played black, then by IH(ii), $b$ is either majority-white or tied before $P_1$'s move.
If tied, then $b$ contains fewer than $m+1$ white balls, and so $P_1$ plays white in $b$, thereby regaining the majority.
If majority-white, then $P_1$ plays white in $b$ or some bin in $S_{r,1}$.
In particular, the default move is legal.
Regardless, (i)--(iii) continue to hold.
\end{proof}

Observe that any white ball beyond $m+1$ is unnecessary to obtain a majority in a bin.
As such, we say a bin containing $x$ white balls has $\max\{x,m+1\}$ \textbf{non-wasted white balls}.
Let $f(r)$ denote the number of non-wasted white balls in $S_{r,2}$ immediately before $P_2$'s turn in round $r$.
Observe that $f$ is monotonically increasing in $r$, and that $f(j(2m+1))=j(m+1)$ implies $P_1$ wins at least half of the bins.
In fact, under the above strategy, $f$ is \textit{strictly} increasing over rounds in which a white ball is available for $P_1$ to play:

\begin{lemma}
\label{lem.strictly increasing}
If there exists a bin in $S_{r+1,1}$ that contains fewer than $m+1$ white balls, and furthermore, there is a white ball for $P_1$ to play in round $r+1$, then Table~\ref{table.strategy} ensures
\[
f(r+1)\geq f(r)+1.
\]
\end{lemma}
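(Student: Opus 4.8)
The plan is to track how the non-wasted white count of the tracked set evolves across the two plays separating the snapshots that define $f(r)$ and $f(r+1)$, namely $P_2$'s move in round $r$ and $P_1$'s move in round $r+1$, together with the two re-selections $S_{r,2}\to S_{r+1,1}\to S_{r+1,2}$. I would write $f(r+1)-f(r)$ as the sum of the change caused by $P_2$'s round-$r$ move (with the update to $S_{r+1,1}$) and the change caused by $P_1$'s round-$(r+1)$ move (with the update to $S_{r+1,2}$), and then show that the first term is nonnegative while the second is at least $1$. The second bound is the easy half: the hypothesis is exactly the condition under which Table~\ref{table.strategy} prescribes a genuine move, and in every branch ($P_1$'s default move, the fill of an empty bin $b'\in S_{r+1,1}$, or a replay of white into the attacked bin $b$) $P_1$ places a white ball into a bin of $S_{r+1,1}$ that currently holds fewer than $m+1$ white balls, adding exactly one non-wasted white ball there. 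Since that bin is majority-white immediately after the play, it survives the re-selection that forms $S_{r+1,2}$, so the gain is retained.

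For the first term I would argue along the three cases of the legal-move lemma. When $P_2$ plays white (Case I), no white count decreases, and because the tracked set always collects the majority-white bins with the most white balls (padding only with empty bins), the re-selection can only replace a counted bin by one with at least as many non-wasted white balls; hence the tracked count does not drop. When $P_2$ plays black outside $S_{r,2}$ (Case II), the legal-move lemma already yields $S_{r+1,1}=S_{r,2}$, and a black ball changes no white count, so the tracked count is unchanged before $P_1$ moves. In both cases $P_2$'s contribution is nonnegative, and combining with the $+1$ from $P_1$ gives the claim.

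The delicate case—and the one I expect to be the main obstacle—is Case III, where $P_2$ plays black \emph{inside} the tracked set. Here invariant (ii) of the legal-move lemma is essential: every non-majority bin of $S_{r,2}$ is empty, so $P_2$'s black ball either lands in an empty padding bin, where it destroys no non-wasted white (that bin carries $0$ white before and after, and its only effect is that it may be swapped for another empty bin during re-selection), or it lands in a majority-white bin and merely shaves its margin. In the former situation $P_1$ answers by filling a fresh empty bin $b'\in S_{r+1,1}$; in the latter $P_1$ replays white into the same bin (when it still holds fewer than $m+1$ white balls, which includes the case where $P_2$ has just reduced it to a tie) to restore its majority, or plays the default move when that bin already holds $m+1$ white balls and hence remains majority-white on its own. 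The real bookkeeping is to verify that these responses keep every previously counted majority-white bin inside the tracked set and that the empty-bin swaps never discard a bin carrying non-wasted white; this is exactly where invariant (ii) and the precise wording of Table~\ref{table.strategy} must be combined. Once this is in place, $P_2$'s contribution is nonnegative in all three cases and the inequality $f(r+1)\ge f(r)+1$ follows.
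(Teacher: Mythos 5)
Your overall case structure (splitting on whether $P_2$ played white, black outside $S_{r,2}$, or black inside $S_{r,2}$) matches the paper's proof, and your treatment of the first two cases is sound. But your organizing claim --- that $f(r+1)-f(r)$ splits as a nonnegative term $f_{r+1,1}-f(r)$ plus a term $f(r+1)-f_{r+1,1}\geq 1$ obtained because ``in every branch $P_1$ places a white ball into a bin of $S_{r+1,1}$'' --- breaks down in exactly the Case III subcase you flagged as delicate. Suppose $b\in S_{r,2}$ is majority-white by a margin of one with $x\geq 2$ white balls (such bins arise, e.g., from a bin that $P_2$ has repeatedly attacked and $P_1$ has repeatedly restored, so its snapshots read $1$--$0$, $2$--$1$, $3$--$2$, and so on), and $P_2$'s black ball ties it. At that instant $b$ is neither majority-white nor among the emptiest bins, so $b$ can drop out of the tracked set: if at least $j$ other bins are majority-white --- which the invariants do not forbid, since $P_2$ may have seeded one-ball majority-white bins outside $S_{r,2}$ whenever all tracked bins were already majority-white --- then $S_{r+1,1}$ consists of majority-white bins only, and $b$'s slot is taken by a bin holding possibly a single white ball. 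Hence $f_{r+1,1}$ can be strictly smaller than $f(r)$ (by as much as $x-1$), so your ``first term is nonnegative'' claim is false. Moreover, in this branch Table~\ref{table.strategy} directs $P_1$ to play white in $b$ itself --- a bin that is \emph{not} in $S_{r+1,1}$ --- so your justification of the second term fails as well, and the bookkeeping goal you state at the end (that every previously counted majority-white bin stays inside the tracked set) is not verifiable because it is simply not true: $b$ genuinely leaves the tracked set for one half-move.

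The lemma survives because $b$ re-enters the tracked set after $P_1$'s reply: the restoring white ball makes $b$ majority-white with $x+1\leq m+1$ white balls, which beats whatever bin temporarily replaced it, so $S_{r+1,2}$ again contains $b$ together with the other majority-white bins of $S_{r,2}$, giving $f(r+1)\geq f(r)+1$ in one step. This is precisely how the paper handles Case III: it compares $f_{r+1,2}$ directly with $f_{r,2}$ and never asserts anything about the intermediate quantity $f_{r+1,1}$, because that quantity can dip. To repair your argument, keep the two-step decomposition for Cases I and II, but in Case III abandon it and instead track the net effect of the full exchange ($P_2$'s black ball plus $P_1$'s restoring white ball) on the tracked count.
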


\begin{proof}
Let $f_{r,i}$ denote the number of non-wasted white balls in $S_{r,i}$ immediately before $P_i$'s turn in round $r$, so that $f(r)=f_{r,2}$.
If $P_2$ plays white in round $r$, then $f_{r+1,1}\geq f_{r,2}$, and then $P_1$ plays in such a way that
\[
f(r+1)
=f_{r+1,2}
=f_{r+1,1}+1
\geq f_{r,2}+1
=f(r)+1.
\]
If in round $r$, $P_2$ plays black in some bin $b\not\in S_{r,2}$, then $S_{r+1,1}=S_{r,2}$, and then $P_1$ plays in such a way that
\[
f(r+1)
=f_{r+1,2}
=f_{r+1,1}+1
=f_{r,2}+1
=f(r)+1.
\]
If in round $r$, $P_2$ plays black in some bin $b\in S_{r,2}$, then $P_1$ similarly plays in such a way that
\[
f(r+1)
=f_{r+1,2}
=f_{r,2}+1
=f(r)+1.
\qedhere
\]
\end{proof}

As such, the only way $P_2$ can prevent $P_1$ from winning at least half of the bins is by playing enough white balls that none are available for $P_1$'s move in round $j(m+1)$.
Since there are $j(2m+1)$ white balls total, this means $P_1$ needs to play white for almost all of these first rounds; specifically, $P_2$ must play strictly fewer than $j-1$ black balls in the first $j(m+1)-1$ rounds.
As we will see, under this play, almost half of the white balls played by $P_2$ in the early game will necessarily contribute to $f(r)$, thereby failing to keep $P_1$ from winning half of the bins:

\begin{lemma}
\label{lem.early game}
Pick $r>2j$ and suppose $m>2r$.
If in the first $r$ rounds, $P_2$ plays at most $j$ black balls, then Table~\ref{table.strategy} ensures
\[
f(r+1)
\geq \bigg(1+\frac{j-1}{2j-1}\bigg)(r-j)+1.
\]
\end{lemma}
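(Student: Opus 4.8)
The plan is to track how fast the potential $f$ grows over the first $r$ rounds and to convert $P_2$'s obligation to play many white balls into a surplus of non-wasted white balls inside $S$. First I would check that the hypotheses of Lemma~\ref{lem.strictly increasing} hold in every one of the first $r$ rounds: by the end of round $r+1$ at most $2r+1$ balls have been played, and since $m>2r$ no bin can hold $m+1$ white balls, so some bin of the relevant $j$-set is always sub-majority and a white ball is always available. Writing $\Delta_\rho:=f(\rho+1)-f(\rho)$, Lemma~\ref{lem.strictly increasing} gives $\Delta_\rho\ge1$ for each $\rho\le r$, and the proof of that lemma in fact shows that $P_1$'s reply contributes exactly one non-wasted white ball to $S$, while $P_2$'s own ball in round $\rho$ raises the running top-$j$ count by some amount $\delta_\rho$. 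Since incrementing one white count by one changes the sum of the $j$ largest white counts by $0$ or $1$, we have $\delta_\rho\in\{0,1\}$, with $\delta_\rho=0$ whenever $P_2$ plays black. Telescoping then gives
\[
f(r+1)=r+\sum_{\rho=1}^{r}\delta_\rho,
\]
and because in the early game every played white ball is non-wasted, this equals $N_w-\beta$, where $N_w=(r+1)+W_2$ is the total number of white balls in play ($W_2\ge r-j$ of them played by $P_2$, since $P_2$ plays at most $j$ black) and $\beta$ is the number of white balls sitting in the $j$ bins outside $S_{r+1,2}$.

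With this bookkeeping the whole statement reduces to a single estimate on how many white balls $P_2$ can keep out of $S$: if I can show
\[
\beta\le\frac{j}{2j-1}(r-j)+(j-1),
\]
then substituting into $f(r+1)=N_w-\beta$ and using $W_2\ge r-j$ yields $f(r+1)\ge\frac{3j-2}{2j-1}(r-j)+2$, which is even stronger than the claimed bound. So the entire problem becomes: bound the white mass that $P_2$ can hide in the bottom $j$ bins.

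The heart of the argument, and the step I expect to be the main obstacle, is this bound on $\beta$. The mechanism is $P_1$'s concentration: his default move always adds a white ball to the \emph{fullest} sub-majority bin of $S$, so in the early game (where no bin is ever completed) $P_1$ repeatedly pumps a single dominant bin $b^\star$, which therefore holds more white balls than any other bin and permanently occupies one of the $j$ slots of $S$ using $P_1$'s own balls. That leaves only $2j-1$ non-dominant bins, and at any moment at most the least-full $j$ of these can lie outside $S$; since the smallest $j$ of $2j-1$ nonnegative numbers with a fixed sum carry at most a $\tfrac{j}{2j-1}$ fraction of that sum, at most a $\tfrac{j}{2j-1}$ fraction of $P_2$'s white balls can be hidden. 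I would make this quantitative with a charging/potential argument: a white ball of $P_2$ is hidden only when it lands in a bin lying strictly below the current top-$j$ threshold, i.e.\ a bin with ``room'' beneath that threshold; and because $P_1$ spends his default moves raising the \emph{maximum} rather than this threshold, each unit of room that $P_2$ later exploits must first have been created by a ball that entered $S$ (a $\delta_\rho=1$ event). Balancing $j$ hideable slots against $2j-1$ non-dominant bins is exactly what produces the ratio $\tfrac{j-1}{2j-1}$ appearing in the target inequality.

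The delicate points, which I expect to absorb into the additive $(j-1)$ and the hypotheses $r>2j$ and $m>2r$, are threefold: (i) the $O(j)$ rounds in which $P_1$ departs from the default (the empty-bin and restore responses of Table~\ref{table.strategy}), during which $b^\star$ may not be pumped; (ii) the initial phase of length $O(j)$ in which $S$ still contains empty bins, so that every white ball, including $P_2$'s, is captured and nothing can be hidden, which is precisely the source of the $(r-j)$ rather than $r$; and (iii) the majority-white preference and tie-breaking in the definition of $S_{r,i}$, which can let a bin with equally many white balls fall outside $S$. Each perturbs the clean $\tfrac{j}{2j-1}$ accounting by only an additive $O(j)$, but verifying that these corrections are genuinely lower-order, so that the fraction survives with the stated slack, is the crux of the proof.
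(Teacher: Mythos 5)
Your reduction hinges on the absolute bound $\beta \le \frac{j}{2j-1}(r-j)+(j-1)$, and that inequality is false. Take $j=2$ (four bins), $r=6$, $m\ge 13$, and let $P_2$ play white every round, cycling through the three bins that $P_1$'s default move is not pumping; breaking ties in favor of bin $1$, every $P_1$ move prescribed by Table~\ref{table.strategy} is the default into bin $1$, so immediately before $P_2$'s turn in round $7$ the white counts are $(7,2,2,2)$. The two bins outside $S_{7,2}$ then hold $\beta=4$ white balls, while your bound reads $\frac{2}{3}\cdot 4+1=\frac{11}{3}<4$. The structural reason is that you decoupled two quantities that must be coupled: $\beta$ scales with the number of white balls $P_2$ actually plays, so an upper bound on $\beta$ depending only on $r$ and $j$ cannot legitimately be combined with the worst-case lower bound $W_2\ge r-j$. (In the counterexample the lemma's conclusion still holds, precisely because there $W_2=r$ rather than $r-j$; but your chain of inequalities is not a valid proof for general play.) What is needed is a coupled statement, e.g.\ that only a $\frac{j}{2j-1}$-type fraction of the white mass \emph{outside the dominant bin} can be hidden, applied to the actual total in play.

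Beyond this, the mechanism you propose for the fraction---the round-by-round charging argument---is exactly the part you leave unproven (you defer the additive corrections as ``the crux''), and it faces real obstacles: $S_{r,i}$ changes over time, and $P_1$'s non-default responses (forced when $P_2$ plays black into an empty bin of $S$) place single white balls into bins that can later drop out of the top $j$, so even $P_1$'s own balls can end up hidden; relatedly, your bookkeeping $\delta_\rho\in\{0,1\}$ fails when $P_2$ plays black in a one-white bin of $S$, where $P_2$'s ball contributes $-1$ and $P_1$'s reply $+2$. The paper avoids all of this dynamics with a static averaging argument at round $r+1$: (a) since at most $j$ black balls exist and every nonempty non-majority-white bin has at least as many blacks as whites, the majority-white bins hold $W\ge (r+1)+(r-j)-j=2r-2j+1$ white balls; (b) the at least $r+1-j$ default moves force a single bin with $w_1\ge r-j+1$; and (c) the sum of the $j$ largest white counts among the $l+e\le 2j$ majority-white or empty bins is at least $w_1+\frac{j-1}{l+e-1}(W-w_1)\ge\big(1+\frac{j-1}{2j-1}\big)(r-j)+1$. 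This computes the bound directly from the configuration at round $r+1$, needs no per-round charging, and automatically couples the hidden mass to the whites actually played; reworking your argument along these lines would repair both gaps.
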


\begin{proof}
Consider the state of the game immediately before $P_2$'s turn in round $r+1$.
Let $l$ denote the total number of majority-white bins, let $e$ denote the total number of empty bins, let $w_1\geq\cdots\geq w_{l+e}$ denote numbers of white balls in these majority-white and empty bins, and let $W=\sum_{i=1}^{l}w_i$ denote the total number of white balls in majority-white bins.

At this point in the game, at most $j$ black balls have been played, and so at most $j$ white balls reside in bins that are not majority-white.
As such, $P_2$ contributed at least $r-2j$ white balls to majority-white bins, while $P_1$ has contributed $r+1$, meaning $W\geq 2r-2j+1$.
Also, $P_1$ played the default move in all but at most $j$ turns (i.e., when responding to $P_2$ playing black in certain ways), and so $w_1\geq r-j+1$.
Furthermore, $m>2r$ ensures that every white ball played up to this point is non-wasted, and so
\begin{align*}
f(r+1)
=w_1+\sum_{i=2}^k w_i
&\geq w_1+\frac{j-1}{l+e-1}\sum_{i=2}^{l+e} w_i\\
&=\bigg(1-\frac{j-1}{l+e-1}\bigg)w_1+\frac{j-1}{l+e-1}W\\
&\geq \bigg(1-\frac{j-1}{l+e-1}\bigg)(r-j+1)+\frac{j-1}{l+e-1}(2r-2j+1)\\
&=\bigg(1+\frac{j-1}{l+e-1}\bigg)(r-j)+1\\
&\geq \bigg(1+\frac{j-1}{2j-1}\bigg)(r-j)+1.
\hspace{1in}\qedhere
\end{align*}
\end{proof}

\begin{proof}[Proof of Lemma~\ref{lem.main}]
The result is trivial when $j=1$.
Now suppose $j\geq 2$.
By Lemma~\ref{lem.strictly increasing}, $P_1$ forces a tie if $P_2$ plays more than $j$ black balls in the first $j(m+1)-1$ rounds.
Now suppose $P_2$ plays at most $j$ black balls in these rounds.
Then Lemma~\ref{lem.early game} gives
\[
f(7j+1)
\geq \bigg(1+\frac{j-1}{2j-1}\bigg)\cdot 6j+1
\geq 8j+1,
\]
where the last step follows from the fact that $j\mapsto\frac{j-1}{2j-1}$ is an increasing function for $j\geq2$.
Furthermore, a white ball is necessarily available for $P_1$ to play in each of the first $jm$ rounds.
As such, the monotonicity of $f$, Lemma~\ref{lem.strictly increasing}, and the above inequality together give
\[
f(j(2m+1))
\geq f(jm)
\geq \Big(jm-(7j+1)\Big)+f(7j+1)
\geq j(m+1),
\]
meaning $P_1$ wins at least half of the bins, as desired.
\end{proof}

\section{Discussion}

In this paper, we introduce Utility Ghost, a game-based redistricting protocol in which two players take turns assigning atoms (that could be voting precincts, counties, or even individual houses) to districts.
The theoretical analysis in this paper proves that, in the non-geometrically constrained version of the game (where districts are not required to be contiguous, for example), if each player has half of the votes, then both players have a strategy to win half of the districts.
We also report numerical experiments in small-scale settings (where we can solve the game by naively implementing the minimax algorithm), showing that partisan symmetry is obtained under optimal play.

Note that the notion of optimal play assumes a particular utility function for each player, which in this case is simply the number of seats won.
In practice, players could have very different objectives; for instance, the majority party might aim to maximize the safety of its majority instead of maximizing the number of seats.
Such an objective implicitly requires a probabilistic model for the vote, and presumably, the atoms' random vote variables are highly correlated (cf.~\cite{Silver:18}).
One could also consider non-partisan objectives like favoring incumbents, or even a combination of different objectives.
In these cases, the protocol would not change, only the utility functions $u_1$ and $u_2$, but the game would not necessarily be zero-sum and interesting properties may arise. 

In order to analyze large-scale instances of Utility Ghost (say, with larger states, smaller atoms, and/or more districts), different techniques will be required.
Presumably, one could leverage state-of-the-art deep neural networks that learn how to play board games like chess and Go through self-play~\cite{SilverEtal:17}.
Such an approach would use a set of admissible maps to determine the set of valid moves at each turn and learn an evaluation function that is implemented by a neural network. 
The set of admissible maps should satisfy various (possibly state-specific) legal constraints such as one person--one vote, geographic compactness, and restrictions arising from the Voting Rights Act.
Markov Chain Monte Carlo sampling could be used to produce a small, but representative set of maps~\cite{BangiaEtal:17}.
The complexity of the game depends chiefly on the number of atoms, which equals the number of turns in the game.
It would be interesting to design relevant atoms that take political geography into consideration.

\section*{Acknowledgments}

The authors thank Boris Alexeev, Joseph Iverson and John Jasper for helpful feedback on an initial draft of this manuscript.
DGM was partially supported by AFOSR FA9550-18-1-0107, NSF DMS 1829955, and the Simons Institute of the Theory of Computing.
SV was partially supported by EOARD FA9550-18-1-7007 and by the Simons Algorithms and Geometry (A\&G) Think Tank.
The views expressed in this article are those of the authors and do not reflect the official policy or position of the authors' employers, the United States Air Force, Department of Defense, or the U.S.\ Government.

\end{document}